\DeclareMathOperator{\Div}{div}
\renewcommand{\epsilon}{\varepsilon}
\newcommand{\R}{\mathbb{R}}
\newcommand{\C}{\mathbb{C}}
\renewcommand{\S}{\mathbb{S}}
\newcommand{\N}{\mathbb{N}}
\newcommand{\eps}{\varepsilon}
\newcommand{\id}{\text{id}}
\newcommand{\Ome}{\Omega}
\newtheorem{thm}{Theorem}
\newtheorem{cor}[thm]{Corollary}
\newtheorem{prop}[thm]{Proposition}
\newtheorem{lem}[thm]{Lemma}
\renewcommand{\phi}{\varphi}
\newtheorem*{thm*}{Theorem}
\newtheorem*{claim*}{Claim}
\theoremstyle{remark}
\newtheorem*{rem*}{Remark}
\newcounter{remark}
\newcounter{case}
\newcounter{construction}
\newcounter{fact}
\newcounter{step}
\newcommand{\Onp}{O_{p+1}(\R)\times O_{n-p+1}(\R)}
\theoremstyle{plain}
\newtheorem*{thmmain}{Theorem~\ref{th:main}}
\newtheorem*{cormain}{Corollary~\ref{cor}}
\title{Minimal hypersurfaces asymptotic to Simons cones}
\author{Laurent Mazet \thanks{The author was partially supported by the
ANR-11-IS01-0002 grant.}}
\date{}
\begin{document}

\maketitle

\begin{abstract}
In this paper, we prove that, up to similarity, there are only two minimal
hypersurfaces in $\R^{n+2}$ that are asymptotic to a Simons cone, \textit{i.e.}
the minimal cone over the minimal hypersurface $\sqrt{\frac pn}\S^p\times
\sqrt{\frac{n-p}n} \S^{n-p}$ of $\S^{n+1}$
\end{abstract}

\noindent\textbf{Mathematical Subject Classification} : 49F10, 53A10

\noindent\textbf{Keywords} : minimal hypersurfaces, Simons cone.


\section{Introduction}


One important property of minimal hypersurfaces is the monotonicity formula.
If $\Sigma$ is a proper minimal hypersurface in $\R^{n+2}$, it says that the
quantity
$$
\theta(p,r)=\frac1{\omega_{n+1}r^{n+1}}\textrm{Vol}(\Sigma\cap B(p,r))
$$
is a non decreasing function of $r$ (here $\omega_{n+1}$ is the volume of the
unit ball of dimension $n+1$ and $B(p,r)$ denote the ball of $\R^{n+2}$ centered
at $p$ and radius $r$). We notice that in this paper, by minimal hypersurface, we mean
smooth proper hypersurface with vanishing mean curvature.

Hence we can define the density at infinity of $\Sigma$ as
$\theta_\infty(\Sigma)=\lim_\infty \theta(p,r)$. The monotonicity implies that
$\theta_\infty(\Sigma)\ge 1$ and $\theta_\infty(\Sigma)= 1$ iff $\Sigma$ is a
hyperplane. It also implies that if $\theta_\infty(\Sigma)\le2$, $\Sigma$ is embedded. One
interesting question is to understand the gap between this
value $1$ and the density at infinity of $\Sigma$ for $\Sigma$ not a hyperplane.

When $\theta_\infty(\Sigma)$ is finite, the asymptotic behaviour of $\Sigma$ is
given by a minimal cone which is the limit of a blow-down sequence
$(t_i\Sigma)_{i\in \N}$ with $t_i\searrow 0$ (here the limit is in the varifold
sense). This cone has density
$\theta_\infty(\Sigma)$ so the study of minimal cones is important to understand
what are the possible densities at infinity.

In dimension $3$ ($n=1$), it is known that $\theta_\infty(\Sigma)\ge 2$ and it
is conjectured that this value $2$ is only realized by catenoids and singly
periodic Scherk surfaces (see \cite{MeWo} for a partial answer by Meeks and
Wolf). In dimension
$4$ ($n=2$), the proof of the Willmore conjecture by Marques and Neves
\cite{MaNe} implies that $\theta_\infty(\Sigma)\ge \pi/2$ if $\Sigma$ is non
planar and this value
corresponds to the cone over a Clifford torus. In higher dimension, good
candidates for the lowest value of the density at infinity are the one of the cone
over product of spheres. More precisely the submanifold
$$
S_{n,p}=\sqrt{\frac pn}\S^p\times \sqrt{\frac{n-p}n} \S^{n-p}
$$
is a minimal hypersurface of $\S^{n+1}$ (notice that $S_{2,1}$ is a Clifford
torus). The cone $C_{n,p}$ over $S_{n,p}$ is a
good candidate for the lowest density at infinity; such a cone (and its image by
linear isometry) is called a Simons cone. More precisely, if $n$ is even
it is conjectured that the density of $C_{n,n/2}$ is a lower bound for the
density at infinity of a non planar minimal hypersurface of $\R^{n+2}$
and, if $n$ is odd, the lower bound is given by $C_{n,(n-1)/2}$. The best known
result about that question is given by Ilmanen and White in \cite{IlWh}; they
obtain lower bounds for the density of some area-minimizing cones under
topological assumptions.

The aim of this paper is to understand the minimal hypersurfaces whose
asymptotic behaviour is given by $C_{n,p}$. This cone is invariant by the subgroup
$O_{p+1}(\R)\times O_{n-p+1}(\R)$ of $O_{n+2}(\R)$. Actually we are going to prove that a
minimal hypersurface asymptotic to $C_{n,p}$ is also invariant by this subgroup.
This implies that, up to homotheties and translations, there are only two such
hypersurfaces. So our main result can be stated as follows.

\begin{thm}
\label{th:main}
For any $n\ge 2$ and $1\le p\le n-1$ there are two minimal hypersurfaces
$\Sigma_{n,p,\pm}\subset\R^{n+2}$ such the following is true. If $\Sigma$ is a
minimal hypersurface of $\R^{n+2}$ asymptotic to a Simons cone, then
$\Sigma=f(\Sigma_{n,p,\pm})$ for some $p\in\{1,\dots,n-1\}$, sign $\pm$ and a
similarity $f$. Moreover $\Sigma_{2p,p,+}=\Sigma_{2p,p,-}$.
\end{thm}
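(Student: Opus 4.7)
The strategy has two parts: classify the $G$-invariant minimal hypersurfaces asymptotic to $C_{n,p}$ (where $G=O_{p+1}(\R)\times O_{n-p+1}(\R)$), and then show that \emph{any} minimal hypersurface asymptotic to a Simons cone must coincide, up to similarity, with one of them.

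For the first part, a $G$-invariant hypersurface corresponds to a planar curve in the first quadrant $\{(s,t):s,t>0\}$, where $s=|x|$ and $t=|y|$ for $(x,y)\in\R^{p+1}\times\R^{n-p+1}$, and $C_{n,p}$ corresponds to the ray $\sqrt{p}\,t=\sqrt{n-p}\,s$. Minimality becomes a second-order ODE, studied classically by Simons, Bombieri--De Giorgi--Giusti, Alencar and Hardt--Simon. I would carry out the phase-plane analysis to prove that on each side of the cone ray there is, up to homothety, a unique orbit asymptotic to it; this produces $\Sigma_{n,p,\pm}$, and their positive dilates foliate $\R^{n+2}\setminus C_{n,p}$ by smooth $G$-invariant minimal hypersurfaces. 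When $n=2p$, the involution $(x,y)\mapsto(y,x)$ preserves $C_{n,p}$ and swaps the two sides, so by uniqueness it sends $\Sigma_{2p,p,+}$ to $\Sigma_{2p,p,-}$, giving the final assertion of the theorem.

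For the second part, let $\Sigma$ be an arbitrary minimal hypersurface asymptotic to $C_{n,p}$. I would apply a sliding-foliation argument: on each side of the cone, consider the one-parameter family of dilates $\{t\,\Sigma_{n,p,\pm}\}_{t>0}$ and take the extremal parameter at which the leaf first touches the corresponding part of $\Sigma$. The strong maximum principle then forces coincidence on the connected component of first contact, and performing the argument on each side produces $\Sigma=f(\Sigma_{n,p,\pm})$ for a suitable similarity $f$. In particular the $G$-invariance of $\Sigma$ is not assumed but \emph{deduced} a posteriori from that of the leaves.

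The principal obstacle is ruling out ``contact at infinity'' in the sliding step, i.e.\ proving that the extremal parameter is realized at a finite interior point. Both $\Sigma$ and the leaves are, near infinity, graphs over a portion of $C_{n,p}$ of functions satisfying to leading order the Jacobi equation linearized at the cone; the admissible decay rates are the roots of a quadratic determined by the spectrum of $|A|^2$ on $S_{n,p}\subset\S^{n+1}$, and are either monotone (stable regime) or oscillatory (unstable regime, which occurs in low dimensions). I would establish sharp asymptotic expansions for both $\Sigma$ and the leaves, identify the dominant decay mode, and match coefficients by an appropriate choice of dilation parameter; in the oscillatory case the naive monotone enclosure by leaves fails and has to be replaced by a more delicate weighted comparison. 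This asymptotic analysis at infinity, uniformly in the unstable regime, is the technical heart of the proof.
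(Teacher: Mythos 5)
Your first part (the phase--plane classification of the $O_{p+1}(\R)\times O_{n-p+1}(\R)$-invariant solutions, and $\Sigma_{2p,p,+}=\Sigma_{2p,p,-}$ via the involution $(x,y)\mapsto(y,x)$) is essentially Section~4 of the paper. The gap is in your second part. The sliding argument presupposes that the positive dilates of $\Sigma_{n,p,\pm}$ foliate the two components of $\R^{n+2}\setminus C_{n,p}$, so that a first-contact leaf exists. That is the Hardt--Simon picture, and it requires the radial indicial roots $\lambda_{0,0,\pm}$ to be real, i.e.\ $(n+1)^2\ge 8n$, i.e.\ $n\ge 6$. For $2\le n\le 5$ the roots are $\frac{-(n+1)\pm i\sqrt{8n-(n+1)^2}}{2}$, the equilibrium $(\theta_0,\theta_0)$ of the phase plane is a focus, the unstable manifolds spiral into it, and consequently each $\Sigma_{n,p,\pm}$ crosses the cone infinitely many times; moreover any two distinct dilates $s_1\Sigma_{n,p,+}$, $s_2\Sigma_{n,p,+}$ cross each other infinitely often, since their normal graphs over the cone differ to leading order by $r^{(1-n)/2}\bigl(s_1^{(n+1)/2}\cos(\beta\ln r+\phi_1)-s_2^{(n+1)/2}\cos(\beta\ln r+\phi_2)\bigr)$, a nonzero oscillating function. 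So there is no foliation, no one-sided barrier, and no extremal parameter of first contact: the sliding step cannot even be set up. You flag this (``the oscillatory case\dots has to be replaced by a more delicate weighted comparison''), but it is not a repairable technicality of the same scheme --- the oscillatory regime is exactly the range ($n\le 5$, together with the non-minimizing cases $(n,p)=(6,1),(6,5)$, which also need separate justification for the foliation) in which the theorem goes beyond Simon--Solomon, and no comparison-by-leaves argument is available there.

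The paper circumvents this entirely. After refining the Allard--Almgren asymptotics (Proposition~2: after a suitable translation, the graph function splits as a purely radial, possibly oscillating, term $u(t)=O(e^{-3t/2})$ plus a remainder of order $e^{-\delta t}$ with $\delta>2$), it runs Alexandrov reflection in the hyperplanes $\{x_1=a\}$. This needs no foliation and is insensitive to the radial oscillation, because reflection in such hyperplanes is tangent to the spherical factors and the non-radial part of the expansion decays strictly faster than the $e^{-2t}$ scale at which the reflected sheets could interfere; the symmetry group is obtained first, and only then is the ODE classification invoked. If you want to salvage a maximum-principle scheme, you should replace the global sliding of leaves by this combination of sharp asymptotics and moving planes.
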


After writing the paper, the author has discovered that the same question was
studied by Simon and Solomon in \cite{SiSo}. They got the same result but
with the restriction that the cone $C_{n,p}$ is area minimizing that is $n\ge 6$
and, if $n=6$, $p\notin\{1,5\}$. So Theorem~\ref{th:main} generalizes their
result to any value of $n$ and $p$.

In dimension $4$ ($n=2$), the proof of the Willmore conjecture by Marques and
Neves \cite{MaNe} gives the following corollary which identifies the non planar
minimal hypersurfaces with the lowest density at infinity.

\begin{cor}\label{cor}
Let $\Sigma$ be a minimal hypersurface of $\R^4$ with
$\theta_\infty(\Sigma)=\frac\pi2$ then $\Sigma=f(\Sigma_{2,1,\pm})$ for a
similarity $f$.
\end{cor}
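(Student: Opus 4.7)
The plan is to identify a blow-down of $\Sigma$ as a Simons cone and then invoke Theorem~\ref{th:main}. Since $\theta_\infty(\Sigma)=\pi/2<2$, the monotonicity formula ensures that $\Sigma$ is embedded and that any blow-down sequence $(t_i\Sigma)_i$ with $t_i\searrow 0$ subconverges as integral varifolds to a stationary cone $C\subset\R^4$ with $\theta_C(0)=\pi/2$. The cone structure forces $\theta_C(x)=1$ for every $x\ne 0$, so every point of $C$ has density bounded by $\pi/2<2$. Allard's regularity theorem then shows that $C\setminus\{0\}$ is a smooth, embedded, multiplicity-one minimal hypersurface, so its link $M=C\cap\S^3$ is a smooth embedded minimal surface in $\S^3$ whose area, by the standard cone density formula $\theta_C(0)=\mathrm{Area}(M)/(4\pi)$, equals $2\pi^2$.

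Next I would identify $M$ using Marques--Neves. Writing $M=M_1\sqcup\cdots\sqcup M_k$ as a union of connected components, each $M_i$ is a smooth closed embedded minimal surface in $\S^3$. A genus-$0$ component is, by Almgren's theorem, a great $2$-sphere with area $4\pi$. A positive-genus component yields, after stereographic projection to $\R^3$, an embedded closed surface of the same genus whose Willmore energy equals $\mathrm{Area}(M_i)$ (since $M_i$ is minimal in $\S^3$), and Marques--Neves then forces $\mathrm{Area}(M_i)\ge 2\pi^2$ with equality only when $M_i$ is a Clifford torus up to an isometry of $\S^3$. The constraint $\sum_i\mathrm{Area}(M_i)=2\pi^2$ leaves only $k=1$ with $M$ a Clifford torus. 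Consequently $C$ is the Simons cone $C_{2,1}$ up to a linear isometry of $\R^4$, $\Sigma$ is asymptotic to a Simons cone, and Theorem~\ref{th:main} with $n=2$, $p=1$ produces a similarity $f$ with $\Sigma=f(\Sigma_{2,1,\pm})$.

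The delicate step is the rigidity part of Marques--Neves: one needs not only their inequality, but also the fact that a closed embedded minimal surface in $\S^3$ of area $2\pi^2$ and positive genus is a Clifford torus up to an ambient isometry of $\S^3$, so that $C$ really is a Simons cone and not merely a M{\"o}bius image of one. Given this, everything else is routine: monotonicity, Allard regularity, Almgren's theorem on minimal $2$-spheres in $\S^3$, and one application of Theorem~\ref{th:main}.
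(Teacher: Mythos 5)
Your overall strategy is the same as the paper's: blow down to a stationary cone $C$, identify its link as a Clifford torus via Almgren's theorem and the Marques--Neves Willmore theorem, and then apply Theorem~\ref{th:main}. Your component analysis, the identity $\theta_C(0)=\mathrm{Area}(M)/(4\pi)$, and the final reduction are all fine. The worry you flag at the end is actually not the delicate point: Theorem~B of Marques--Neves is stated precisely for closed embedded \emph{minimal} surfaces of positive genus in $\S^3$ and gives area $\ge 2\pi^2$ with equality only for the Clifford torus up to an ambient isometry of $\S^3$, so the cone is genuinely a Simons cone and not merely a M\"obius image of one.

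The genuine gap is in your regularity step for $C$. The assertion that ``the cone structure forces $\theta_C(x)=1$ for every $x\ne 0$'' is false as a deduction: for a stationary cone one only gets $\theta_C(x)\le\theta_C(0)=\pi/2$ for $x\ne 0$ (together with $\theta_C(x)\ge 1$ on the support, by integrality). Since $3/2<\pi/2$, this bound does not even exclude density-$3/2$ points such as those on a triple junction, so you cannot conclude multiplicity one and smoothness this way. Moreover Allard's regularity theorem requires the density ratio to be within a dimensional $\eps$ of $1$ on a ball, not merely below $2$, so it does not apply directly under your hypotheses. This is exactly the step the paper delegates to the proof of Theorem~A.1 in Marques--Neves (as recalled in Section~\ref{sec:density}): it is a substantive input, not a routine consequence of monotonicity plus Allard, and your write-up needs either that citation or an actual argument ruling out the singular set of $C$ away from the origin.
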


The proof of the main theorem starts with a result of Allard and Almgren
\cite{AlAl}, which implies that a minimal hypersurface $\Sigma$ asymptotic to
$C_{n,p}$ can be described as a normal graph over $C_{n,p}$ and the function
defining the graph
has a certain asymptotic. The first part of the proof consists in improving this
asymptotic to get a very good description of the behaviour of $\Sigma$ outside a
compact subset. A similar work appears in the paper of Simon and Solomon
\cite{SiSo} but we add some extra arguments to deal with low values of $n$.

Using this description, we are then able to apply the Alexandrov reflection
technique \cite{Ale} to $\Sigma$ to prove that it possesses a lot of symmetries
and then is invariant by $\Onp$. Here we apply this technique to non compact
hypersurfaces; this is why we need to know the asymptotic behaviour of $\Sigma$
(see \cite{Sch2} for a similar situation). This argument is different from the
one of Simon and Solomon. The last step of the proof consists
in classifying the minimal hypersurfaces invariant by such a group of
isometries.

The paper is divided as follow. In Section~\ref{sec:prel}, we recall some
definitions and study the Simons cones $C_{n,p}$. We mainly study the minimal
surface equation satisfied by normal graphs over $C_{n,p}$. We are interested on
the asymptotic behaviour of solutions of this equation. In
Section~\ref{sec:sym}, we prove that a minimal hypersurface asymptotic to $C_{n,p}$
is $\Onp$ invariant. This is the main step of the proof of
Theorem~\ref{th:main}; we also give the proof of Corollary~\ref{cor}. In
Section~\ref{sec:ode}, we classify all minimal hypersurfaces that are
invariant by $\Onp$. The paper ends with two appendices where we give two results
used in Sections~\ref{sec:prel} and \ref{sec:sym}.

\textbf{Acknowledgments.} The author would like to thank Fernando Marques for discussions which are at the
origin of this work. He would like also to thank the referee for all the precise and
important remarks he made about the writing of the paper.


\section{Preliminar results}
\label{sec:prel}


\subsection{Density at infinity and asymptotic behaviour of minimal hypersurfaces}
\label{sec:density}


Let $\Sigma$ be a proper smooth minimal hypersurface in $\R^{n+2}$. The monotonicity
formula tells that the quantity.
$$
\theta(p,r)=\frac1{\omega_{n+1}r^{n+1}}\textrm{Vol}(\Sigma\cap B(p,r))
$$
is increasing in $r$; here $\omega_{n+1}$ is the volume of the unit ball of
dimension $n+1$ and $B(p,r)$ denote the ball of $\R^{n+2}$ centered at $p$ and
radius $r$.

Hence we can define the density at infinity of $\Sigma$ as
$\theta_\infty(\Sigma)=\lim_\infty \theta(p,r)$. This definition does not depend
on the point $p$. Choosing $p\in \Sigma$, we get $\theta_\infty(\Sigma)\ge
\lim_0 \theta(p,r)=1$ and the equality case in the monotonicity formula says
that  $\theta_\infty(\Sigma)=1$ if and only if $\Sigma$ is planar.

Assume now that $\theta_\infty(\Sigma)$ is finite. Then if $(t_i)_{i\in\N}$ is a
decreasing sequence converging to $0$, there is a subsequence such that the
blow-down sequence $(t_i\Sigma)$ converges to $C$ in the varifold sense where
$C$ is a cone over a stationary varifold of $\S^{n+1}$. This cone $C$ is called
a limit cone of $\Sigma$, we also say that $\Sigma$ is asymptotic to $C$. \textit{A priori}, the cone $C$ depends on the chosen
sequence
$(t_i)$ and is not smooth outside the origin. As an example, if $\Sigma$ is a
catenoid, $\theta_\infty(\Sigma)=2$ and $C$ is a plane with multiplicity $2$.
Notice that in dimension $3$ ($n=1$), except the plane, no minimal cone is
smooth outside the origin.

In $\R^4$ ($n=2$), if $\theta_\infty(\Sigma)<2$; the proof of Theorem A.1 in
\cite{MaNe} given by Marques and Neves implies that a limit cone $C$ is smooth
outside the origin. So $C$
is a cone over a smooth minimal surface $S$ of $\S^3$. If
$\theta_\infty(\Sigma)>1$, $S$ is not an equator of $\S^3$ and has non zero
genus (see Almgren~\cite{Alm2}). So Theorem B in \cite{MaNe} implies that the area of $S$
is at least
$2\pi^2$ and is $2\pi^2$ if and only if $S$ is a Clifford torus. Thus
$\theta_\infty(\Sigma)\ge \frac\pi2$ and $\theta_\infty(\Sigma)= \frac\pi2$ iff
a limit cone $C$ is a cone over a Clifford torus (in that case $C$ does not
depend on the blow-down sequence by a result of Allard and Almgren \cite{AlAl}).


\subsection{The Simons cones}


The aim of this paper is to identify a minimal hypersurface in terms of its
limit cone. Actually we are interested to particular minimal cones.

For $n\ge 2$ and $1\le p\le n-1$, let us write $\R^{n+2}=
\R^{p+1}\times\R^{n-p+1}$ and consider the submanifold $S_{n,p}=\sqrt{\frac
pn}\S^p\times \sqrt{\frac{n-p}n} \S^{n-p}$ which is a minimal hypersurface of
$\S^{n+1}$. Let $C_{n,p}$ be the minimal cone over the minimal hypersurfaces
$S_{n,p}$. The surface $S_{2,1}$ is the Clifford torus of $\S^3$ and
$C_{2p,p}$ is the classical Simons cone~\cite{Sims}. So in the following, we call $C_{n,p}$
a \textit{Simons cone}. Actually, any image of $C_{n,p}$ by a linear
isometry is also called a Simons cone.

$C_{n,p}$ can be parametrized by
$$
X:\R\times\S^p\times\S^{n-p}\rightarrow \R^{n+2}; (t,x,y)\mapsto e^t(\sqrt{\frac
pn}x,\sqrt{\frac{n-p}n}y).
$$
Using this coordinate system, the metric on $C_{n,p}$ is $e^{2t}(dt^2+\frac pn
ds_1^2+\frac{n-p}n ds_2^2)$ where $ds_1^2$ and $ds_2^2$ are respectively the
round metrics on $\S^p$ and $\S^{n-p}$. The unit normal vector to $C_{n,p}$ is given by
$$
N(t,x,y)=(\sqrt{\frac{n-p}n} x,-\sqrt{\frac pn}y)
$$
Let $(e_i)$ and $(f_\alpha)$ be respectively orthonormal bases of $T_x\S^p$ and
$T_y\S^{n-p}$. Then an orthonormal basis of $T_{X(t,x,y)}C_{n,p}$ is given by 
$$
((\sqrt{\frac
pn}x,\sqrt{\frac{n-p}n}y),(e_1,0),\cdots,(e_p,0),(0,f_1),\cdots,(0,f_{n-p}))
$$
In this basis, the shape operator $S$ of $C_{n,p}$ is diagonal with 
\begin{align*}
S&((\sqrt{\frac
pn}x,\sqrt{\frac{n-p}n}y))=0\\
S&((e_i,0))=e^{-t}\sqrt{\frac{n-p}p}(e_i,0)\\
S&((0,f_\alpha))=-e^{-t}\sqrt{\frac p{n-p}}(0,f_\alpha)
\end{align*}


\subsection{The minimal surface equation}


In the following of the paper, we study minimal hypersurfaces of $\R^{n+2}$ that
can be described as normal graphs over a cone $C_{n,p}$. More precisely, such a
surface is the image of the following parametrization:
$$
Y: (t,x,y)\mapsto e^t\Big((\sqrt{\frac pn}x,\sqrt{\frac{n-p}n}y)+g(t,x,y)
(\sqrt{\frac{n-p}n}x,-\sqrt{\frac pn}y)\Big)
$$
where $g$ is a smooth function defined on a domain of
$\R\times\S^p\times\S^{n-p}$.

Using computations of the preceding section, this hypersurface is minimal if $g$
satisfies to the following partial differential equation:
\begin{equation}\label{eq:mse1}
\begin{split}
0&=\partial_t\left(\frac{g+g_t}W \right)+ \frac
n{p(1+\sqrt{\frac{n-p}p}g)}\Div_1(\frac{\nabla^1
g}{(1+\sqrt{\frac{n-p}p}g)W})\\
&\quad\quad+\frac n{(n-p)(1-\sqrt{\frac p{n-p}}g)}
\Div_2(\frac{\nabla^2 g}{(1-\sqrt{\frac p{n-p}}g)W})\\
&\quad\quad+\frac {ng+(g+g_t)(n+\frac{n(n-2p)}{\sqrt{p(n-p)}}
 g)}{W(1+\frac{n-2p}{\sqrt{p(n-p)}}g-g^2)}
\end{split}
\end{equation}
where $\nabla^1$, $\nabla^2$, $\Div_1$, $\Div_2$ are respectively the gradient
and the divergence operator for the round metric with respect to the $x\in\S^p$
and $y\in\S^{n-p}$ variables and $W$ is given by the following expression:
$$
W=\Big(1+(g+g_t)^2+\frac np\frac{|\nabla^1
g|^2}{(1+\sqrt{\frac{n-p}p}g)^2}+ \frac n{n-p}
\frac{|\nabla^2g|^2}{(1-\sqrt{\frac p{n-p}}g)^2}\Big)^{\frac12}
$$

The expression of Equation \eqref{eq:mse1} is long but we notice that it is an
elliptic second order equation and moreover it is uniformly elliptic if $\nabla
g$ is uniformly bounded.

Besides, for most of our arguments, we only need a simplified version of Equation
\eqref{eq:mse1}. Indeed the function $g$ will be close to $0$, so we will
use the following form:
\begin{equation}\label{eq:mse2}
0=g_{tt}+\frac np \Delta_1 g+\frac n{n-p}\Delta_2 g+(n+1) g_t+2ng+ Q(g)
\end{equation}
where $\Delta_1$ and $\Delta_2$ are respectively the Laplace operator with
respect to $x$ and $y$ variables and $Q(g)$ gathers all the nonlinear terms of
Equation~\eqref{eq:mse1}.


\subsection{The kernel of the linearized operator}


The linearized operator of the minimal surface equation \eqref{eq:mse2} is 
$$
Lu=u_{tt}+\frac np \Delta_1 u+\frac n{n-p}\Delta_2 u+(n+1) u_t+2nu
$$

Our analysis of solutions of \eqref{eq:mse1} is based on the asymptotic behaviour
of elements in the kernel of $L$. Such an element in the kernel can be decomposed as
the sum of terms of the form $v(t)\Phi(x)\Psi(y)$ where $\Phi$ and $\Psi$ are
respectively eigenfunctions of the Laplace operator on $\S^p$ and $\S^{n-p}$.
The eigenvalues of $\Delta$ on $\S^m$ are $-k(k+m-1)$ ($k\ge 0$). So
$(t,x,y)\mapsto v(t)\Phi(x)\Psi(y)$ is in the kernel if $v$ satisfies the
following ode for some $k$ and $l$:
$$
0=v_{tt}+(n+1)v_t+(2n-\frac np k(k+p-1)-\frac n{n-p} l(l+(n-p)-1))v
$$
The asymptotic behaviour of $v$ is given by the roots of 
$$
0=\lambda^2+(n+1)\lambda+2n-\frac np k(k+p-1)-\frac n{n-p} l(l+(n-p)-1)
$$
In the following, these roots are denoted by $\lambda_{k,l,\pm}$.
Actually, we are only interested in roots whose real part is between $-2$ and
$0$.

If $k+l=0$, the equation is $0=\lambda^2+(n+1)\lambda+2n$ whose
discriminant $(n+1)^2-8n$ is negative if $n<6$ and positive if $n\ge 6$. So the
roots are 
$$
\begin{cases}
\lambda_{0,0,\pm}&=\frac{-(n+1)\pm i\sqrt{8n-(n+1)^2}}2 \text{ if }n<6\\
\lambda_{0,0,\pm}&=\frac{-(n+1)\pm \sqrt{(n+1)^2-8n}}2 \text{ if }n\ge6
\end{cases}
$$
If $n\ge 6$, a computation gives $\lambda_{0,0,-}<\lambda_{0,0,+}<-2$. So the
real part of $\lambda_{0,0,\pm}$ is between $-2$ and $0$ only for $n=2,3$.

If $k+l=1$, the equation is $0=\lambda^2+(n+1)\lambda+n=(\lambda+1)
(\lambda+n)$. So $\lambda_{k,l,+}=-1$ and $\lambda_{k,l,-}=-n$ which lies in
$[-2,0)$ if $n=2$.

If $k+l\ge 2$, $2n-\frac np k(k+p-1)-\frac n{n-p} l(l+(n-p)-1)\le 0$, so
$\lambda_{k,l,+}\ge 0$ and $\lambda_{k,l,-}\le -(n+1)\le -3$.


\subsection{Asymptotic behaviour of a minimal graph}


In this section, we study the asymptotic behaviour of a minimal normal graph
over a cone $C_{n,p}$. Actually, we prove an improvement result for the
asymptotic behaviour of solutions of \eqref{eq:mse1}.

First we recall a classical definition of weighted norm for functions on
$\R_+\times \S^p\times\S^{n-p}$. If $u$ is a continuous function on $\R_+\times
\S^p\times\S^{n-p}$ and $\delta$ is a real number, we define its weighted norm
$$
\|u\|_\delta=\sup \{e^{\delta t}|u(t,x,y)|, (t,x,y)\in\R_+\times\S^p\times
\S^{n-p}\}
$$
when this quantity is finite. When $\|u\|_\delta<+\infty$, we will also write
$u=O(e^{-\delta t})$.

We then have the following result that describes the asymptotic behaviour of a
solution of \eqref{eq:mse1} with $\|u\|_\delta$ finite for $\delta>0$.

\begin{prop}\label{prop:improv}
Let $u$ be a solution of \eqref{eq:mse1} on $\R_+\times \S^p\times\S^{n-p}$ such
that $\nabla u$ is uniformly bounded and $\|u\|_\delta<+\infty$ with $\delta>0$
and $-2\delta\neq \lambda_{k,l,\pm}$ for all $k,l\ge 0$. Then $u$ can be written
$u=v+r$ where $\|v\|_\delta<+\infty$ satisfies to $L(v)=0$ and
$\|r\|_{2\delta}<+\infty$.
\end{prop}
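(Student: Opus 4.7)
The plan is to write \eqref{eq:mse1} in the form $L(u)=-Q(u)$, where $L$ is the linearised operator of Section~2.4 and $Q$ collects the nonlinear terms displayed in \eqref{eq:mse2}, then produce a correction $r$ solving $L(r)=-Q(u)$ with $\|r\|_{2\delta}<\infty$, and finally set $v:=u-r$. One then has $L(v)=L(u)-L(r)=0$, and because $e^{-2\delta t}\le e^{-\delta t}$ on $\R_+$ one gets $\|v\|_\delta\le \|u\|_\delta+\|r\|_\delta<\infty$ automatically.

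A preliminary step is to upgrade the weighted $C^0$ decay of $u$ to weighted $C^{2,\alpha}$ decay. Since $|\nabla u|$ is bounded, equation \eqref{eq:mse1} is uniformly elliptic. Rescaling $u$ on unit cylinders $[t_0-1,t_0+1]\times\S^p\times\S^{n-p}$ by the factor $e^{\delta t_0}$ gives a function bounded in $C^0$ and solving a uniformly elliptic equation with bounded coefficients, so interior Schauder estimates translate back to $\|\nabla u\|_\delta,\|\nabla^2 u\|_\delta<\infty$ (with Hölder seminorms as well). Every term in $Q(u)$ is at least quadratic in $u$ and its first or second derivatives, as one reads off of \eqref{eq:mse1} expanded about $g=0$, so this weighted $C^2$ bound delivers $\|Q(u)\|_{2\delta}<\infty$.

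To construct $r$ I decompose on spherical harmonics on the product $\S^p\times\S^{n-p}$: writing $r=\sum r_{kl}^{ij}(t)\Phi_k^i(x)\Psi_l^j(y)$ and similarly for $Q(u)$, each coefficient must satisfy the ODE
\begin{equation*}
(r_{kl}^{ij})''+(n+1)(r_{kl}^{ij})'+\Bigl(2n-\tfrac np k(k+p-1)-\tfrac n{n-p} l(l+n-p-1)\Bigr)r_{kl}^{ij}=-Q_{kl}^{ij}(t),
\end{equation*}
whose characteristic roots are the $\lambda_{k,l,\pm}$ of Section~2.4. Since $-2\delta\ne\lambda_{k,l,\pm}$ by hypothesis, variation of parameters integrating from $+\infty$ against the appropriate Green kernel produces a particular solution with $|r_{kl}^{ij}(t)|\le C_{kl}e^{-2\delta t}\|Q_{kl}^{ij}\|_{2\delta}$. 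The constants $C_{kl}$ are harmless for the finitely many modes $k+l\le 1$; for $k+l\ge 2$ the computations of Section~2.4 show $\lambda_{k,l,+}\ge 0$ and $\lambda_{k,l,-}\le -(n+1)$, so $-2\delta$ is uniformly separated from both roots and one even gets $C_{kl}=O(1/(k^2+l^2))$ as $k+l\to\infty$.

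The main obstacle is assembling these mode-wise solutions into a single function $r$ with finite \emph{pointwise} weighted norm $\|r\|_{2\delta}$, rather than just a finite weighted $L^2$ norm on the fibres. I would handle this by working in weighted Hölder spaces throughout and invoking an asymptotically-cylindrical elliptic theory in the spirit of Lockhart--McOwen: the condition $-2\delta\notin\{\lambda_{k,l,\pm}\}$ is precisely the indicial condition that makes $L$ an isomorphism between the weighted Hölder spaces of decay rate $2\delta$ on source and solution, yielding $\|r\|_{C^{2,\alpha}_{2\delta}}\le C\|Q(u)\|_{C^{0,\alpha}_{2\delta}}$. More elementarily, one can combine the mode-wise ODE bound with the weighted $C^2$ control of $Q(u)$ and Sobolev embedding on the compact fibre $\S^p\times\S^{n-p}$ to bound the tail of the spherical expansion and recover the pointwise weighted estimate on $r$.
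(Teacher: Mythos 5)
Your overall strategy is the same as the paper's: elliptic estimates to control $Q(u)$ in the $2\delta$-weighted norm, spectral decomposition on $\S^p\times\S^{n-p}$, mode-wise resolution of the ODE with characteristic roots $\lambda_{k,l,\pm}$ (this is exactly Lemma~\ref{lem:eqdiff} of the appendix), and resummation; packaging $r$ as a global particular solution of $L(r)=-Q(u)$ and setting $v=u-r$ is an equivalent reformulation of what the paper does. The one genuine gap is the step you yourself flag as ``the main obstacle'': passing from mode-wise bounds to a finite pointwise norm $\|r\|_{2\delta}$. Your elementary suggestion is underpowered as stated. You only establish weighted $C^{2,\alpha}$ control of $u$, hence only two derivatives' worth of decay of the Fourier coefficients of $Q(u)$, roughly $(1+\lambda_k)^{-1}(1+\mu_l)^{-1}$. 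Against this you must sum the multiplicities of the eigenspaces, which grow like $k^{p}$ and $l^{n-p}$, and the sup norms of the eigenfunctions, which grow like $\lambda_k^{(p-1)/4}$ and $\mu_l^{((n-p)-1)/4}$ (the Sogge bounds quoted in the paper); two derivatives are nowhere near enough for any $p\ge 1$. The gain $C_{kl}=O(1/(k^2+l^2))$ you claim from the ODE constants is also optimistic (it is $O((\lambda_k+\mu_l)^{-1/2})$, since $|\lambda_{k,l,+}-\lambda_{k,l,-}|\sim 2\sqrt{\lambda_k+\mu_l}$) and in any case irrelevant to summability. The fix is exactly what the paper does: iterate the Schauder bootstrap to get $\|\nabla^m u\|_\delta<+\infty$ and $\|\nabla^m Q(u)\|_{2\delta}<+\infty$ for \emph{every} $m$, then integrate by parts $a$ times against $\Delta_1$ and $b$ times against $\Delta_2$ to gain factors $(1+\lambda_k)^{-a}(1+\mu_l)^{-b}$ with $2a\ge 2+\tfrac{3p}{2}$ and $2b\ge 2+\tfrac{3(n-p)}{2}$; this beats the multiplicity and sup-norm growth and makes the tail sums converge in $\|\cdot\|_{2\delta}$.

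Your alternative via Lockhart--McOwen weighted H\"older theory would indeed bypass the summation entirely ($L$ is translation invariant on the cylinder and the $\lambda_{k,l,\pm}$ are its indicial roots), and it is a legitimately different route from the paper's; but it imports a much heavier black box than anything the paper uses, and you would still have to verify that the non-resonance condition concerns the weight line $\Re(\lambda)=-2\delta$ rather than the roots themselves. If you carry out the elementary route with the full bootstrap, the proof closes; as written, the convergence of the spherical-harmonic series in the pointwise weighted norm is asserted rather than proved.
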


\begin{proof}
The proof is based on the spectral decomposition of functions on
$\S^p\times\S^{n-p}$. 

First, since $\nabla u$ is uniformly bounded, Equation \eqref{eq:mse1} is
uniformly elliptic, so classical elliptic estimates give upper bounds on the
derivatives of $u$: more precisely, for any $m>0$, there is a constant $C_m$
such that for any $s>1$
$$
\|\nabla^m u \|_{C^0([s,s+1]\times\S^p\times \S^{n-p})}\le C_m\|u\|_{C^0(
[s-1,s+2]\times\S^p\times \S^{n-p})}
$$
This implies that for any $m$, $\|\nabla^m u \|_\delta<+\infty$. Since the term
$Q(u)$ in \eqref{eq:mse2} gathers all the nonlinear terms in $u$ we have
$\|Q(u)\|_{2\delta}<\infty$ and $\|\nabla^m Q(u)\|_{2\delta}<\infty$. 

In the preceding section, we have describe the spectrum of the Laplace operator
on the
sphere. So let us denote $\lambda_k=k(k+p-1)$ and $\Phi_{k,\alpha}$ an 
orthonormal basis of the eigenspace of $\Delta_1$ associated to $-\lambda_k$ on
$\S^p$. We also denote $\mu_l=l(l+n-p-1)$ and $\Psi_{l,\beta}$ an orthonormal
basis of the eigenspace of $\Delta_2$ associated to $-\mu_l$ on $\S^{n-p}$. The
multplicity of the $-\lambda_k$ and $-\mu_l$ are respectively bounded by $c
(k^p+1)$ and $c(l^{n-p}+1)$. Moreover, we have the following estimates for the
$L^\infty$ norm of the eigenfunctions (see \cite{Sog}):
$$
\|\Phi_{k,\alpha}\|_\infty\le c \lambda_k^{\frac{p-1}4}\textrm{ and
}\|\Psi_{l,\beta}\|_\infty\le c \mu_l^{\frac{(n-p)-1}4}.
$$

Now let us define 
\begin{align*}
g_{k,l,\alpha,\beta}(t)&=\int_{\S^p\times \S^{n-p}} u(t,x,y) \Phi_{k,\alpha}(x)
\Psi_{l,\beta}(y) dx dy,\\
f_{k,l,\alpha,\beta}(t)&=-\int_{\S^p\times \S^{n-p}} Q(u)(t,x,y)
\Phi_{k,\alpha}(x) \Psi_{l,\beta}(y) dx dy.
\end{align*}

$g_{k,l,\alpha,\beta}$ and $f_{k,l,\alpha,\beta}$ are smooth functions on $\R_+$
and, from \eqref{eq:mse2}, they satisfy
$$
g_{k,l,\alpha,\beta}''-(\lambda_{k,l,+}+\lambda_{k,l,-})g_{k,l,\alpha,\beta}'+
(\lambda_{k,l,+}\times\lambda_{k,l,-})g_{k,l,\alpha,\beta}= f_{k,l,\alpha,\beta}
$$

Using $\Delta_1 \Phi_{k,\alpha}=-\lambda_k\Phi_{k,\alpha}$, $\Delta_2
\Psi_{l,\beta}=-\mu_l\Psi_{l, \beta}$ and integration by parts, we get the
following estimates for $a,b\in\N$:
\begin{align*}
|g_{k,l,\alpha,\beta}(s)|&\le c\frac{\sup_{t=s}|\nabla^{2a+2b}u(t,x,y)|}
{(1+\lambda_k)^a(1+\mu_l)^b}\\
|f_{k,l,\alpha,\beta}(s)|&\le c\frac{\sup_{t=s}|\nabla^{2a+2b}Q(u)(t,x,y)|}
{(1+\lambda_k)^a(1+\mu_l)^b}
\end{align*}
Thus we get 
\begin{align*}
\|g_{k,l,\alpha,\beta}\|_\delta&\le c\frac{\|\nabla^{2a+2b}u\|_\delta}
{(1+\lambda_k)^a(1+\mu_l)^b}\\
\|f_{k,l,\alpha,\beta}\|_{2\delta}&\le c\frac{\|\nabla^{2a+2b}Q(u)\|_{2\delta}}
{(1+\lambda_k)^a(1+\mu_l)^b}
\end{align*}

From Lemma~\ref{lem:eqdiff} in Appendix~\ref{ap:ode}, we can write
$$
g_{k,l,\alpha,\beta}(t)=a_{k,l,\alpha,\beta}e^{t\lambda_{k,l,+}} +
b_{k,l,\alpha,\beta}e^{t\lambda_{k,l,-}}+ r_{k,l,\alpha,\beta}(t)
$$
with some estimates on the different terms. First we notice that
$|\lambda_{k,l,+}-\lambda_{k,l,-}|$ and $|2\delta-\Re(\lambda_{k,l,\pm})|$ are uniformly
bounded from below far from $0$ and
$\frac{(2+|\lambda_{k,l,+}|^2+|\lambda_{k,l,-}|^2)^{1/2}}
{|\lambda_{k,l,+}-\lambda_{k,l,-}|}$ is uniformly bounded. Thus there
is a uniform constant $c$ such that
\begin{align*}
\max(|a_{k,l,\alpha,\beta}|,|a_{k,l,\alpha,\beta}|)&\le c (
\|g_{k,l,\alpha,\beta}\|_\delta+ \|g'_{k,l,\alpha,\beta}\|_\delta+
\|f_{k,l,\alpha,\beta}\|_{2\delta})\\
&\le c \frac{\|\nabla^{2a+2b}u\|_\delta+ \|\nabla^{2a+2b+1}u\|_\delta+
\|\nabla^{2a+2b}Q(u)\|_{2\delta}}{(1+\lambda_k)^a(1+\mu_l)^b}
\end{align*}
and  
\begin{align*}
\|r_{k,l,\alpha,\beta}\|_{2\delta}&\le c \|f_{k,l,\alpha,\beta}\|_{2\delta}\\
&\le c \frac{\|\nabla^{2a+2b}Q(u)\|_{2\delta}}{(1+\lambda_k)^a(1+\mu_l)^b}.
\end{align*}
Besides if $\Re(\lambda_{k,l,\pm})\ge -\delta$, $t\mapsto e^{t\lambda_{k,l,\pm}}$
does not have a finite $\delta$-norm so $a_{k,l,\beta,\alpha}$ or
$b_{k,l,\alpha,\beta}$ vanishes. When $\Re(\lambda_{k,l,\pm})\le -2\delta$,
$t\mapsto e^{t\lambda_{k,l,\pm}}$ has a finite $2\delta$-norm equal to $1$.

Finally we have the following writing
\begin{align*}
u&=\sum_{-2\delta\le\Re(\lambda_{k,l,+})\le -\delta} a_{k,l,\alpha,\beta}
e^{t\lambda_{k,l,+}} \Phi_{k,\alpha}(x)\Psi_{l,\beta}(y)
+\sum_{-2\delta\le\Re(\lambda_{k,l,-})\le -\delta} b_{k,l,\alpha,\beta} e^{t\lambda_{k,l,-}}
\Phi_{k,\alpha}(x)\Psi_{l,\beta}(y)\\
&\quad+\sum_{\Re(\lambda_{k,l,+})< -2\delta} a_{k,l,\alpha,\beta}
e^{t\lambda_{k,l,+}} \Phi_{k,\alpha}(x)\Psi_{l,\beta}(y)
+\sum_{\Re(\lambda_{k,l,-})< -2\delta}b_{k,l,\alpha,\beta} e^{t\lambda_{k,l,-}}
\Phi_{k,\alpha}(x)\Psi_{l,\beta}(y)\\
&\quad+\sum r_{k,l,\alpha,\beta}(t) \Phi_{k,\alpha}(x)\Psi_{l,\beta}(y)
\end{align*}
First we notice that the first two sums are finite and are elements of the kernel of
$L$, this is the expected function $v$. Let us see that the other sums
converge and have finite $2\delta$-norms. Let $A(t,x,y)$ be the sum on
$\Re(\lambda_{k,l,+})< -2\delta$ . In
the following computation, we use the expressions of $\lambda_k$ and $\mu_l$,
their multiplicities and the $L^\infty$ estimates on $\Phi_{k,\alpha}$ and
$\Psi_{l,\beta}$.
\begin{align*}
\|A\|_{2\delta}&\le C\sum_{\Re(\lambda_{k,l,+})< -2\delta} |a_{k,l,\alpha,\beta}|
 \lambda_k^{\frac{p-1}4}\mu_l^{\frac{(n-p)-1}4}\\
&\le C\sum_{k,l,\alpha,\beta}\frac{\|\nabla^{2a+2b}u\|_\delta+
\|\nabla^{2a+2b+1}u\|_\delta+
\|\nabla^{2a+2b}Q(u)\|_{2\delta}}{(1+\lambda_k)^a(1+\mu_l)^b}
\lambda_k^{\frac{p-1}4} \mu_l^{\frac{(n-p)-1}4}\\
&\le C(\|\nabla^{2a+2b}u\|_\delta+ \|\nabla^{2a+2b+1}u\|_\delta+
\|\nabla^{2a+2b}Q(u)\|_{2\delta})\sum_{k,l}\frac{(1+k^{\frac{3p}2})
(1+l^{\frac{3(n-p)}2})}{(1+k^2)^a (1+l^2)^b}\\
&<+\infty
\end{align*}
if $a$ and $b$ are chosen such that $2a-\frac{3p}2\ge 2$ and
$2b-\frac{3(n-p)}2\ge 2$. The study of the last two sums works the same.
\end{proof}

\begin{rem*}
From the proof, the function $v$ in the kernel of $L$ can be actually written as
a finite sum of terms of the form $e^{\lambda t}\Phi(x)\Psi(y)$ with
$-2\delta<\Re(\lambda)\le -\delta$.

A second remark is that the function $r$ is a solution of $L(r)+Q(v+r)=0$
with $\|Q(v,r)\|_{2\delta}<+\infty$. So elliptic estimates give that $\|\nabla^m
r\|_{2\delta}<+\infty$.
\end{rem*}


\section{Symmetries of minimal hypersurfaces asymptotic to Simons
cones}
\label{sec:sym}


Let $\Sigma$ be a minimal hypersurface of $\R^{n+2}$ which has a
Simons cone as limit cone. If $f$ is a similarity (composition of an
isometry and a homothety) of $\R^{n+2}$, $f(\Sigma)$ is also a minimal
hypersurface asymptotic to a Simons cone. The following result says that
up to similarities, there are two such hypersurfaces (at $n$ and $p$ fixed) and even
one when $n=2p$.

\begin{thmmain}
For any $n\ge 2$ and $1\le p \le n-1$, there are two minimal hypersurfaces
$\Sigma_{n,p,\pm}$ in $\R^{n+2}$ such that the following is true. If $\Sigma$ is
a minimal hypersurface of $\R^{n+2}$ with a Simons cone as limit cone,
then $\Sigma=f(\Sigma_{n,p,\pm})$ for some $p\in\{1,\dots,n-1\}$, sign $\pm$ and a similarity $f$.
Moreover $\Sigma_{2p,p,-}=\Sigma_{2p,p,+}$.
\end{thmmain}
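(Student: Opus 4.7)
The plan is to combine the refined asymptotic analysis of Section~\ref{sec:prel} with Alexandrov reflection to force a large symmetry group on $\Sigma$, and then invoke the ODE classification of Section~\ref{sec:ode}. The starting point is a theorem of Allard and Almgren~\cite{AlAl}: a minimal hypersurface asymptotic to a smooth minimal cone has a unique limit cone, and the convergence is $C^{1,\alpha}$ on the end. After a linear isometry we may assume the limit cone is $C_{n,p}$, so outside a large ball $\Sigma$ is the normal graph of a function $g$ over $C_{n,p}$, with $g\to 0$ and $\nabla g\to 0$ at infinity; Allard regularity together with a scaling argument gives $\|g\|_\delta<+\infty$ for some $\delta>0$.

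I would then iterate Proposition~\ref{prop:improv} to peel off the leading modes of $g$ lying in the kernel of $L$. From the list of $\lambda_{k,l,\pm}$ in Section~2.4, the exponents with $-2<\Re(\lambda)\le 0$ are only $\lambda_{1,0,+}=\lambda_{0,1,+}=-1$, the complex pair $\lambda_{0,0,\pm}$ when $n\in\{2,3\}$, and $\lambda_{1,0,-}=\lambda_{0,1,-}=-n=-2$ when $n=2$. Each of these modes carries a geometric meaning: the $k+l=1$ modes correspond to infinitesimal translations of $C_{n,p}$ along $\R^{p+1}$ or $\R^{n-p+1}$, while the $k=l=0$ modes are $\Onp$-invariant and correspond to perturbations within the family of invariant solutions of Section~\ref{sec:ode}. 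Translating $\Sigma$ to kill the $k+l=1$ coefficients, I obtain a precise asymptotic expansion in which every non-$\Onp$-invariant mode decays strictly faster than the $\Onp$-invariant leading terms, so that $g$ is asymptotically $\Onp$-invariant at a quantitative rate.

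With this control at infinity I would apply the Alexandrov reflection technique~\cite{Ale}. For each hyperplane $H\subset\R^{n+2}$ through the origin that preserves the splitting $\R^{p+1}\times\R^{n-p+1}$, slide a parallel hyperplane from infinity towards $H$. The sharpened asymptotics ensure that the end of $\Sigma$ and its reflection lie on the correct sides of any such moving hyperplane, so that the moving-plane procedure can be initiated at infinity in the spirit of~\cite{Sch2}. The interior and boundary maximum principles for the minimal surface equation then force symmetry across $H$ when the moving plane reaches $H$ itself. Since the reflections across all such hyperplanes generate $\Onp$, we conclude that $\Sigma$ is $\Onp$-invariant.

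An $\Onp$-invariant minimal hypersurface is the preimage of a planar curve under $(x,y)\mapsto(|x|,|y|)$, and minimality reduces to an ODE analyzed in Section~\ref{sec:ode}. That analysis yields, up to dilation, exactly two complete orbits asymptotic to the line representing the Simons cone---one on each side---giving $\Sigma_{n,p,\pm}$; when $n=2p$ the extra isometry $(x,y)\mapsto(y,x)$ of $C_{2p,p}$ exchanges the two sides and identifies the two solutions, yielding $\Sigma_{2p,p,+}=\Sigma_{2p,p,-}$. The main difficulty is the moving-plane step, since on a non-compact hypersurface the reflection argument must be started at infinity; this is exactly what forces one to push the asymptotic refinement all the way to an $\Onp$-invariant leading term. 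The low-dimensional cases $n=2,3$ are extra-delicate because the relevant $\lambda_{0,0,\pm}$ are complex, so the spectral peeling is only oscillatory; but since these modes are themselves $\Onp$-invariant, they do not obstruct the reflection argument.
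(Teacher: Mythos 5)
Your outline follows the paper's strategy quite closely (Allard--Almgren graph representation, iteration of Proposition~\ref{prop:improv} to peel off kernel modes, translation to remove the rate~$-1$ modes, Alexandrov reflection started at infinity, then the phase-plane classification of Section~\ref{sec:ode}), but there is one genuine gap, concentrated in the case $n=2$. You list the exponent $\lambda_{1,0,-}=\lambda_{0,1,-}=-n=-2$ among the relevant modes and then assert that ``translating $\Sigma$ to kill the $k+l=1$ coefficients'' leaves only $\Onp$-invariant leading terms. A translation by $X_0$ contributes $e^{-t}(X_0,N)$ to the graph function, so it can only cancel the $\lambda_{k,l,+}=-1$ coefficients; the companion Jacobi mode $e^{-2t}(X_1,N)$ attached to the root $-2$ is \emph{not} generated by any translation and survives your normalization. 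Your fallback claim --- that every non-invariant mode decays strictly faster than the invariant leading term $e^{-3t/2}$ --- is true but insufficient: the moving-plane step compares the quantity $(n-p)|x|^2-p|y|^2\approx e^{2t}\cdot 2g\sqrt{p(n-p)}$ before and after reflection across $\{x_1=a\}$, and the reflection shifts it by at least $2(n-p)a^2$. A non-invariant term of size exactly $e^{-2t}$ contributes $O(1)$ to this comparison, which cannot be absorbed when $a$ is small, i.e.\ precisely when the plane must be pushed all the way to $\{x_1=0\}$ (this is why Lemma~\ref{lem:sym} requires a remainder with $\delta>2$). So for $n=2$ your asymptotics are not strong enough to run the reflection.

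The paper closes this gap with a separate balancing argument: it applies the vanishing of the flux $\int_{\partial\Omega_{t_0}}\nu=0$ on the region bounded by $\{t=t_0\}$, computes the flux asymptotically (Appendix~\ref{ap:estim}), and finds $\int_{\S^1\times\S^1}\tfrac12(X_1,N)N+O(e^{-2\eps t})=0$, whence $X_1=0$. With that extra ingredient your argument goes through as in the paper; everything else (the treatment of the oscillatory $\lambda_{0,0,\pm}$ modes for $n=2,3$, the reflection across the hyperplanes generating $\Onp$, and the ODE classification with the identification $\Sigma_{2p,p,+}=\Sigma_{2p,p,-}$ via the symmetry exchanging the two factors) matches the paper's proof.
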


Actually, $C_{n,p}=f(C_{n,n-p})$ for a certain isometry $f$ of $\R^{n+2}$ so
$\Sigma_{n,p,\pm}=\Sigma_{n,n-p,\mp}$.

In the case $n=2$, we have a corollary of this which comes from the proof of the
Willmore conjecture by Marques and Neves \cite{MaNe}.

\begin{cormain}
Let $\Sigma$ be a minimal hypersurface of $\R^4$ whose density at infinity is
$\theta_\infty(\Sigma)=\frac\pi2$. Then $\Sigma=f(\Sigma_{2,1,\pm})$ for a
similarity $f$.
\end{cormain}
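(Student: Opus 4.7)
The plan is to reduce the statement to Theorem~\ref{th:main} by showing that the density hypothesis forces $\Sigma$ to be asymptotic to a Simons cone; every ingredient needed has already been recorded in Section~\ref{sec:density}, so it remains only to assemble them.

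First I would exploit $\theta_\infty(\Sigma)=\pi/2<2$ to invoke the proof of Theorem A.1 in \cite{MaNe}: any blow-down limit cone $C$ of $\Sigma$ is smooth away from the origin, hence is a cone with some integer multiplicity $m\ge 1$ over a smooth closed minimal surface $S\subset\S^3$. Since $\theta_\infty(\Sigma)=\pi/2>1$, the hypersurface $\Sigma$ is not a hyperplane, so Almgren's theorem \cite{Alm2} ensures that $S$ is not a great sphere and has positive genus.

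Next, a direct computation of the density of a cone over $S\subset\S^3$ gives $\theta_\infty(C)=m\,\mathrm{Area}(S)/(4\pi)$, so the hypothesis becomes $m\,\mathrm{Area}(S)=2\pi^2$. Theorem B of \cite{MaNe} (the Willmore conjecture) states that any smooth embedded closed minimal surface of positive genus in $\S^3$ satisfies $\mathrm{Area}(S)\ge 2\pi^2$, with equality precisely when $S$ is a Clifford torus. This forces simultaneously $m=1$ and $\mathrm{Area}(S)=2\pi^2$, so $S$ is a Clifford torus up to an isometry of $\S^3$. Since $S_{2,1}$ is itself a Clifford torus and any linear image of $C_{2,1}$ is by definition a Simons cone, $C$ is a Simons cone. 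The Allard–Almgren uniqueness result \cite{AlAl} then guarantees that $C$ does not depend on the chosen blow-down sequence, so $\Sigma$ is asymptotic to a Simons cone in the precise sense required by Theorem~\ref{th:main}.

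Finally, applying Theorem~\ref{th:main} with $n=2$, whose only admissible parameter is $p=1$, yields $\Sigma=f(\Sigma_{2,1,\pm})$ for some similarity $f$. There is no real obstacle here: the content of the corollary is the observation that Marques and Neves's proof of the Willmore conjecture pins down the possible limit cones under the density assumption and reduces an intrinsic density rigidity statement to the classification provided by Theorem~\ref{th:main}.
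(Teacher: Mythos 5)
Your proposal is correct and follows essentially the same route as the paper: the paper's one-line proof simply points back to Section~\ref{sec:density}, where exactly the chain you describe (Theorem A.1 of \cite{MaNe} for smoothness of the limit cone, Almgren's result for positive genus, Theorem B for the area rigidity of the Clifford torus, and Allard--Almgren for uniqueness of the blow-down) is laid out before invoking Theorem~\ref{th:main}. You have merely made explicit the details the author leaves implicit, including the correct density computation $\theta_\infty(C)=m\,\mathrm{Area}(S)/(4\pi)$.
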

\begin{proof}
As explained in Section \ref{sec:density}, $\theta_\infty(\Sigma)=\frac\pi2$
implies that $\Sigma$ is asymptotic to the cone over a Clifford torus so
Theorem~\ref{th:main} applies.
\end{proof}

In order to prove Theorem~\ref{th:main} we first notice that, using an isometry,
we can assume that the limit cone in $C_{n,p}$. The cone $C_{n,p}$ is invariant
by the subgroup $O_{n,p}=O_{p+1}(\R)\times O_{n-p+1}(\R)$ of $O_{n+2}(\R)$. The
following result is the main step of the proof of Theorem~\ref{th:main}. It says
that a minimal hypersurface with $C_{n,p}$ as limit cone is also invariant by
the subgroup $O_{n,p}$.

\begin{thm}\label{th:symme}
Let $\Sigma$ be a minimal hypersurface of $\R^{n+2}$ which has $C_{n,p}$ as limit
cone. Then there is $x_0\in\R^{n+2}$ such that the translated hypersurface
$\Sigma-x_0$ is invariant by $O_{p+1}(\R)\times O_{n-p+1}(\R)$.
\end{thm}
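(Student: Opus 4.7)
The approach is to apply the Alexandrov reflection technique to $\Sigma$ in directions lying in each of the two factors of $\R^{n+2}=\R^{p+1}\times\R^{n-p+1}$, producing, for each such unit direction $e$, a hyperplane of symmetry of $\Sigma$ orthogonal to $e$. All these hyperplanes will meet in a single point $x_0$, and the corresponding reflections generate $\Onp$. Since $\Sigma$ is non-compact, the method requires a precise asymptotic description of $\Sigma$ at infinity, to be provided by the analysis of Section~\ref{sec:prel}.

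After an ambient isometry I may assume the limit cone is $C_{n,p}$ itself. By Allard--Almgren~\cite{AlAl}, outside a large compact set $\Sigma$ is a normal graph over $C_{n,p}$ of a function $g$ with $\|g\|_\delta<\infty$ for some $\delta>0$. I would then iterate Proposition~\ref{prop:improv}, using the explicit list of exponents $\lambda_{k,l,\pm}$ with real part in $(-2,0]$ computed earlier, to obtain a decomposition $g=v+r$ where $v$ is a finite sum of kernel elements of $L$ and $\|r\|_{\delta'}<\infty$ for some $\delta'>1$. The key geometric observation is that the $\lambda=-1$ eigenspace of $L$ (arising from $k+l=1$, $\lambda_{k,l,+}=-1$) has dimension $n+2$ and parametrizes exactly the normal components of translations of $\R^{n+2}$: translating $\Sigma$ by $v_0=(a,b)\in\R^{p+1}\times\R^{n-p+1}$ modifies $g$ at leading order by $e^{-t}\bigl(\sqrt{(n-p)/n}\,a\cdot x-\sqrt{p/n}\,b\cdot y\bigr)$, and the map $v_0\mapsto(\text{this function})$ is a linear isomorphism onto the $\lambda=-1$ eigenspace. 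Hence there is a unique $x_0\in\R^{n+2}$ such that, after replacing $\Sigma$ by $\Sigma-x_0$, the $\lambda=-1$ component of $g$ vanishes; the task then reduces to showing this new $\Sigma$ is $\Onp$-invariant.

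Next, for each unit vector $e\in\R^{p+1}$ I would consider the family of hyperplanes $H_\lambda=\{y\in\R^{n+2}:y\cdot(e,0)=\lambda\}$ and their associated reflections $\sigma_{H_\lambda}$. The cone $C_{n,p}$ is invariant under $\sigma_{H_0}$, so after killing the $\lambda=-1$ part of $g$ the asymptotic of $\Sigma$ is compatible with reflection across $H_0$ up to terms of order $e^{-\delta' t}$ with $\delta'>1$. For $\lambda$ large the reflected image of $\Sigma\cap\{y\cdot(e,0)>\lambda\}$ lies strictly on the opposite side of $\Sigma$ outside a compact set, so the moving plane process starts; one then decreases $\lambda$. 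Either the reflected piece first touches $\Sigma$ at an interior point --- in which case the maximum principle and unique continuation force $\Sigma=\sigma_{H_\lambda}(\Sigma)$ --- or the process continues down to $\lambda=0$, where a boundary version of the maximum principle combined with the refined asymptotic yields symmetry across $H_0$. Running the same argument for every unit $e\in\R^{p+1}$ and every unit $e\in\R^{n-p+1}$ produces invariance of $\Sigma$ under all reflections across hyperplanes through $0$ orthogonal to such directions, and these reflections generate $\Onp$.

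The main obstacle will be the non-compact Alexandrov argument itself: one must verify that the reflected piece remains a valid normal graph over the corresponding part of $\Sigma$ throughout the moving-plane process, and more delicately that the two pieces of $\Sigma$ cannot drift apart at infinity without making contact at a finite point. The precise rate $\delta'>1$ obtained after killing the $\lambda=-1$ terms, together with a boundary form of the strong maximum principle that turns asymptotic contact at infinity into genuine equality (as in~\cite{Sch2}), is exactly what is needed to push the moving plane down to $\lambda=0$ and conclude.
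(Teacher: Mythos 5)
Your overall strategy --- refine the asymptotics via Proposition~\ref{prop:improv}, normalize away the $e^{-t}$ (translation) modes by a choice of $x_0$, then run a non-compact Alexandrov reflection in directions lying in each factor of $\R^{p+1}\times\R^{n-p+1}$ --- is exactly the strategy of the paper. But there is a genuine quantitative gap in the middle step. You claim that after killing the $\lambda=-1$ component the remainder decays like $e^{-\delta' t}$ with $\delta'>1$, and that this rate ``is exactly what is needed'' to push the moving plane down to $\lambda=0$. It is not. In the disjointness estimate that starts and sustains the moving-plane process (Lemma~\ref{lem:sym} of the paper), comparing the quadratic form $Q(\alpha,\beta)=(n-p)|\alpha|^2-p|\beta|^2$ on $\Sigma$ and on its reflected image produces an inequality of the shape $c\,e^{(2-\delta)t}\ge 2(n-p)a^2$, where $\delta$ is the decay rate of the part of the graph function that is \emph{not} $\Onp$-invariant. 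Obtaining a contradiction for all $t\ge t_a\sim\ln(1/a)$ forces $\delta>2$; with $\delta'\in(1,2]$ the obstruction term does not even tend to zero. So you must show that every non-rotationally-invariant kernel mode with $\Re(\lambda_{k,l,\pm})\ge -2$ vanishes, not merely the $\lambda=-1$ modes.

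This is where the second, more substantive omission lies. For $n=2$ the kernel of $L$ contains, besides the translation modes at $\lambda=-1$ and the rotationally invariant modes $\lambda_{0,0,\pm}=-\tfrac32\pm i\tfrac{\sqrt7}2$ (which are harmless, being $\Onp$-invariant and hence compatible with every reflection), the modes $\lambda_{1,0,-}=\lambda_{0,1,-}=-2$, spanned again by the coordinate functions. These decay like $e^{-2t}$, are \emph{not} generated by translations, and therefore cannot be removed by any choice of $x_0$; yet they break the reflection symmetry at exactly the borderline rate. The paper kills them with a separate flux (first-variation) argument, integrating the conormal over the slices $\{t=t_0\}$ and letting $t_0\to\infty$ (Appendix~\ref{ap:estim}); only then does one reach $g=u(t)+O(e^{-\delta t})$ with $u$ depending on $t$ alone and $\delta>2$. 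Your proposal never confronts these modes, so for $n=2$ the moving plane cannot be brought to $\lambda=0$ as written. Relatedly, your final paragraph correctly names the remaining difficulty (drift at infinity without finite contact) but defers it; the paper resolves it concretely through Lemmas~\ref{lem:graph} and~\ref{lem:sym}, which confine any first contact to a compact region where the usual interior and boundary maximum principles apply.
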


The rest of this section is devoted to the proof of this result.


\subsection{Asymptotic behaviour of $\Sigma$}


In this section we study the asymptotic behaviour of a minimal surface with
$C_{n,p}$ as limit cone.

\begin{prop}\label{prop:asymp}
Let $\Sigma$ be a minimal hypersurface of $\R^{n+2}$ with $C_{n,p}$ as limit
cone. Then there is $x_0\in\R^{n+2}$ such
that, outside a compact set, the translate $\Sigma-x_0$ can be described as the
normal graph of a function $g$ over a subdomain of $C_{n,p}$. Moreover the function $g$ can
be written $g(t,x,y)=u(t)+f(t,x,y)$ where $u$ is in the kernel of $L$ and
$\|u\|_{3/2}<+\infty$ and $\|f\|_\delta<+\infty$ for some $\delta>2$.
\end{prop}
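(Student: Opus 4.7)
The plan is to start from Allard--Almgren's convergence result for minimal hypersurfaces with regular tangent cones, then iteratively apply the asymptotic improvement in Proposition~\ref{prop:improv} until the decay crosses every $\lambda_{k,l,\pm}$ with $\mathrm{Re}(\lambda)>-2$, absorbing the $\lambda=-1$ modes along the way by an ambient translation.

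\textbf{Step 1 (initial graph representation).} By \cite{AlAl}, the unique tangent cone $C_{n,p}$ at infinity is attained with a power-law rate: there exist $T_0$ and $\delta_0>0$ such that, outside a compact set, $\Sigma$ is the image of the parametrization $Y$ associated to a function $g$ on $(T_0,+\infty)\times\S^p\times\S^{n-p}$ satisfying $\|g\|_{\delta_0}<+\infty$ and $\nabla g$ uniformly bounded. This $g$ solves \eqref{eq:mse1}, so we are in the setting of Proposition~\ref{prop:improv}.

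\textbf{Step 2 (iterative improvement and absorbing translations).} I would run the bootstrap: choose $\delta_1$ slightly smaller than $\min\{1,\delta_0\}$ avoiding the excluded values $-2\delta\neq\lambda_{k,l,\pm}$, and apply Proposition~\ref{prop:improv} to write $g=v_1+r_1$ with $Lv_1=0$, $\|v_1\|_{\delta_1}<+\infty$, $\|r_1\|_{2\delta_1}<+\infty$. By the remark following the proposition, $v_1$ is a finite sum of modes $e^{\lambda t}\Phi_{k,\alpha}(x)\Psi_{l,\beta}(y)$ with $-2\delta_1<\mathrm{Re}(\lambda)\leq-\delta_1$, and the eigenvalue inventory of the previous subsection forces these $\lambda$ to be among a short list; in particular, in the range $(-2,0)$ the only candidates are $\lambda=-1$ (from $k+l=1$) and, when $n\leq 3$, the purely radial pair $\lambda_{0,0,\pm}$. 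Iterating (and replacing $r_i$ by $g$ at each step after adding back the already-extracted $v_j$'s), the decay of the remainder doubles each time while the kernel piece accumulates new modes in the next window $(-2\delta_{i+1},-\delta_{i+1}]$. Once the current window contains $\mathrm{Re}(\lambda)=-1$, the extracted kernel piece contains a term $\sum_{k+l=1}c_{k,\alpha,l,\beta}\,e^{-t}\Phi_{k,\alpha}(x)\Psi_{l,\beta}(y)$. The key geometric remark is that for $x_0=(a,b)\in\R^{p+1}\times\R^{n-p+1}$, translating $\Sigma$ by $-x_0$ and re-expressing the result as a normal graph over $C_{n,p}$ subtracts from $g$, to leading order, the term $e^{-t}\bigl(\sqrt{\tfrac{n-p}{n}}\langle a,x\rangle-\sqrt{\tfrac{p}{n}}\langle b,y\rangle\bigr)$, precisely a general first-degree spherical harmonic on $\S^p\times\S^{n-p}$ of ``$\lambda=-1$'' type. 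Choosing $x_0$ so that these coefficients cancel those of $v_i$ removes the $\lambda=-1$ modes from the asymptotic expansion of the translated $\Sigma-x_0$; the higher-order corrections introduced by the translation are absorbed into the remainder at rate $e^{-2t}$ or faster.

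\textbf{Step 3 (final decomposition).} After the translation I continue the bootstrap until the decay of the remainder exceeds $2$. The only kernel modes that can obstruct this step in the range $(-2,0)\setminus\{-1\}$ are the radial modes $\lambda_{0,0,\pm}$, which exist in $(-2,0)$ only for $n=2$ (with $\mathrm{Re}=-3/2$) and $n=3$ (with $\mathrm{Re}=-2$). Isolating these $k=l=0$ contributions gives a function $u(t)$ in the kernel of $L$, depending only on $t$, with $\|u\|_{3/2}<+\infty$ (taking $u\equiv 0$ when no such mode arises). The residual $f=g-u$ then satisfies $\|f\|_\delta<+\infty$ for some $\delta>2$, and, by the second remark after Proposition~\ref{prop:improv}, all its derivatives inherit the same decay, which completes the statement.

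\textbf{Main obstacle.} The analytic iteration is routine given Proposition~\ref{prop:improv}, and the eigenvalue bookkeeping is finite; the delicate point is the translation step, where one must verify that the algebraic matching between the first-harmonic kernel modes and the projection $\langle x_0,N\rangle$ is exact at leading order and that the nonlinear error from re-parametrizing the graph after translation is genuinely a higher-order perturbation that re-enters Proposition~\ref{prop:improv} as a remainder with controlled decay. A secondary technical nuisance is to choose the sequence of decay exponents $\delta_i$ so as to avoid the critical values $-2\delta_i=\lambda_{k,l,\pm}$ at each step, in particular on the borderline $\lambda=-n$ modes for small $n$.
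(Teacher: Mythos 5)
Your Steps 1 and 2 follow the paper's proof closely: the same Allard--Almgren graph representation, the same bootstrap via Proposition~\ref{prop:improv}, and the same identification of the $\lambda=-1$ modes with the pairing $e^{-t}(X_0,N)$ so that they are killed by a single ambient translation. For $n\ge 3$ your Step 3 is also essentially the paper's argument. But for $n=2$ there is a genuine gap. Your claim that the only obstructing kernel modes in the window beyond $\lambda=-1$ are the radial ones $\lambda_{0,0,\pm}$ is false when $n=2$: the eigenvalue inventory of Section~\ref{sec:prel} gives, for $k+l=1$, the second root $\lambda_{1,0,-}=\lambda_{0,1,-}=-n$, which equals $-2$ precisely when $n=2$. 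This contributes to $w$ a term $e^{-2t}(X_1,N(t,x,y))$ built from coordinate functions. It is not radial, so it cannot be absorbed into $u(t)$, and it decays exactly like $e^{-2t}$, so it blocks the conclusion $\|f\|_\delta<+\infty$ for some $\delta>2$. It also cannot be removed by a further translation, since a translation acts on the graph function at order $e^{-t}$, not $e^{-2t}$. Treating this mode as a ``technical nuisance'' about choosing the exponents $\delta_i$ away from critical values misses the point: the mode genuinely appears in the expansion and a new idea is required to show its coefficient vanishes.

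The paper handles this with a flux (balancing) argument: it applies the identity $\int_{\partial\Omega_{t_0}}\nu=0$ to the region $\Omega_{t_0}\subset\Sigma$ bounded by $\{t=t_0\}$, computes the flux in Appendix~\ref{ap:estim} using the tentative expansion of $w$, obtains $0=\int_{\S^1\times\S^1}\tfrac12(X_1,N)N+O(e^{-2\eps t_0})$, and concludes $X_1=0$ by letting $t_0\to+\infty$ and pairing with $X_1$. Without this step (or a substitute for it) your proof establishes the proposition only for $n\ge 3$.
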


\begin{proof}
First, we use a result of Allard and Almgren \cite{AlAl} and Simon \cite{Sim1}
which implies that outside a compact set, the hypersurface $\Sigma$ can be
described as the normal graph of a function $g$ over $C_{n,p}$ and the function
$g$ is defined on $[\underline{t},+\infty)\times\S^p\times\S^{n-p}$ and
satisfies $\|g\|_\eps<+\infty$ for some $\eps>0$. The result of Allard and
Almgren applies since all Jacobi functions on $S_{n,p}$ comes from Killing
vectorfields of $\S^{n+1}$ (see Section~6 in \cite{AlAl}). Decreasing slightly $\eps$
if necessary, we can assume that $-2\eps\neq \lambda_{k,l,\pm}$ and apply
Proposition~\ref{prop:improv}. So $g=v+r$ with $v$ in the kernel of $L$ with
decay between $-\eps$ and $-2\eps$ and $\|r\|_{2\eps}<+\infty$. If there is no
element in the kernel of $L$ with decay between $-\eps$ and $-2\eps$, we get
$\|g\|_{2\eps}<\infty$; in that case we have then improved the decay of $g$.
So we can iterate this argument until we get a first non vanishing element in
the kernel.

The first decay of elements in the kernel is given by
$\lambda_{1,0,+}=\lambda_{0,1,+}=-1$. Besides, the eigenfunctions
$\Phi_{1,\alpha}$ and $\Psi_{1,\beta}$ are the coordinates functions so $g$ can
be written
$$
g(t,x,y)=e^{-t}(a_1x_1+\cdots+a_{p+1}x_{p+1}+b_1y_1+\cdots+b_{n-p+1}y_{n-p+1})+
r(t,x,y)
$$ 
with $\|r\|_{1+\eps}<+\infty$ for some $\eps>0$. This can also be written
$$
g(t,x,y)=e^{-t}(X_0, N(t,x,y))+r(t,x,y)
$$

The first term can be interpreted as a translation. More precisely, in the
parametrization $Y$, a term $(X_0,N)N$ appears. So the translated hypersurface
$\Sigma-X_0$ can be expressed as the normal graph of a function $w$ over
$C_{n,p}$ with the following estimates $\|w\|_{1+\eps}<+\infty$ for some
$\eps>0$.

From now on, we study the asymptotic behaviour of $\Sigma-X_0$ as a normal graph
over $C_{n,p}$. We still call $\Sigma$ this translated hypersurface.

If we apply Proposition~\ref{prop:improv}, we get the following writing $w=v+r$
with $v$ in the kernel of $L$ with a decay between $-1-\eps$ and
$-2-2\eps$ and $\|r\|_{2+2\eps}<+\infty$. If $n>3$, all $\lambda_{k,l,\pm}$ are
outside the segment $[-2-2\eps,-1-\eps]$ for $\eps$ close enough to $0$, so $v$
is vanishing and the proposition is proved. If $n=3$, $\lambda_{0,0,\pm}=-2\pm
i\sqrt 2$ is the only possibilities. This value comes from the constant
functions on $\S^p$ and $\S^{n-p}$ so $v$ only depends on $t$, so the
proposition is proved.

When $n=2$, we have two possibilities, $\lambda_{0,0,\pm}=-\frac 32\pm
i\frac{\sqrt 7}2$ coming from constant functions on $\S^p$ and $\S^{n-p}$ and $\lambda_{1,0,-}=\lambda_{0,1,-}=-2$ from the coordinate functions. So $w$ can be written
$$
w(t,x,y)=ae^{-\frac{3t}2}\cos(\frac{\sqrt7t}2+\phi_0)+e^{-2t}(X_1,N(t,x,y))+
r(t,x,y).
$$
Let us prove that actually $X_1$ is vanishing. To prove this, we use a flux
argument. Let us recall that if $\Ome$ is a subset of $\Sigma$ with smooth
boundary and $\nu$ denote the normal to $\partial \Ome$ tangent to $\Sigma$,
then the flux of $\nu$ across $\partial\Ome$ vanishes; more precisely:
$$
\int_{\partial\Ome}\nu=0
$$
We apply this result to the bounded subset $\Ome_{t_0}$ of $\Sigma$ whose
boundary is the hypersurface $\{t=t_0\}$. Using the above expression of $w$ in
Appendix~\ref{ap:estim}, we estimate this flux (see Equation~\eqref{eq:estim})
and we get
$$
0=\int_{\partial\Ome_{t_0}}\nu=\int_{\S^1\times\S^1}\frac12(X_1,N)N+
O(e^{-{2\eps}t}).
$$
Taking the limit $t\rightarrow +\infty$ and taking the scalar product with
$X_1$, we get that $(X_1,N)=0$ for all $(x,y)\in\S^1\times\S^1$ : so $X_1=0$.
This finishes the proof of the proposition.
\end{proof}

We recall that the derivatives of $f$ also have finite $\delta$-norms.


\subsection{Alexandrov reflection}


Let $\Sigma$ be a minimal hypersurface in $\R^{n+2}$ with $C_{n,p}$ as limit
cone. We translate $\Sigma$ such that the asymptotic behaviour of
Proposition~\ref{prop:asymp} is true (the translated hypersurface is still
named $\Sigma$). In this section, we use this asymptotic behaviour to prove
that $\Sigma$ is
invariant by $O_{p+1}(\R)\times O_{n-p+1}(\R)$ and then prove
Theorem~\ref{th:symme}. 

Let us denote the coordinates of $\R^{n+2}$ by
$(x_1,\cdots,x_{p+1},y_1,\cdots,y_{n-p+1})$. Actually, we
are going to prove that $\Sigma$ is  symmetric with respect to
$\{x_1=0\}$. If $s\in O_{p+1}(\R)$, $s(\Sigma)$ satisfies the same hypotheses as
$\Sigma$ so $s(\Sigma)$ will be symmetric with respect to $\{x_1=0\}$ and then
$\Sigma$ will be symmetric with respect to $s^{-1}(\{x_1=0\})$. All these
symmetries imply that $\Sigma$ is $O_{p+1}(\R)$-invariant. For the
$O_{n-p+1}(\R)$-invariance, the proof is similar by exchanging $p$ by $n-p$.

Outside a compact set, the hypersurface $\Sigma$ is the normal graph a function $g$
that can be written as in Proposition~\ref{prop:asymp}
$g(t,x,y)=f(t)+O(e^{-\delta t})$ with $\delta>2$ and $f=O(e^{-\frac{3t}2})$. The first
coordinate of
the point $Y(t,x,y)$ is given by $e^t(\sqrt{\frac pn}+
g(t,x,y)\sqrt{\frac{n-p}n}) x_1$. In the following we are interested to the
following subset of $\Sigma$:
$$
\Sigma_{t_0,a}=Y(\{(t,x,y)\in \R\times\S^p\times\S^{n-p}\,|\,t\ge t_0,
e^t(\sqrt{\frac pn}+ g(t,x,y)\sqrt{\frac{n-p}n}) x_1> a\}).
$$
So a point of $\Sigma$ is in $\Sigma_{t_0,a}$ if it is sufficiently far from the
origin and its first coordinate is larger than $a$.

We denote by $\pi$ the projection map of $\R^{p+1}$ on $\{x_1=0\}$. We have a
first lemma that describes $\Sigma_{t_0,a}$. 

\begin{lem}\label{lem:graph}
There are $t_0$ and $c>0$ such that for any $a>0$ the map
$(\pi,\id):\R^{n+2}\rightarrow\{x_1=0\}\times\R^{n-p+1}$ is injective on
$\Sigma_{t_a,a}$ where
$$
t_a=\max (t_0, \ln \frac ca)
$$
\end{lem}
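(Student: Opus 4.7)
Pulling back $(\pi,\id)$ through the parametrization $Y$ of Proposition~\ref{prop:asymp}, I write $Y(t,x,y)=(A(t,x,y)x,\,B(t,x,y)y)$ with $A=e^t(\sqrt{p/n}+g\sqrt{(n-p)/n})$ and $B=e^t(\sqrt{(n-p)/n}-g\sqrt{p/n})$. By Proposition~\ref{prop:asymp} and the following remark, $g$ and $g_t$ are $O(e^{-3t/2})$ while $\nabla_x g,\nabla_y g$ are $O(e^{-\delta t})$ with $\delta>2$; hence, for $t_0$ large, $A$ and $B$ are positive, $\partial_tA,\partial_tB\ge ce^t$ (so strictly increasing in $t$), while $|\nabla_xA|,|\nabla_xB|=O(e^{(1-\delta)t})$. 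The whole argument is then a quantification of the fact that $\Psi(t,x,y):=(A\bar x,By)$ is a perturbation of the cone map $(e^t\sqrt{p/n}\bar x,e^t\sqrt{(n-p)/n}y)$, which is injective on $\{x_1>0\}$.

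Given two preimages $(t,x,y)$, $(t',x',y')$ of the same value of $\Psi$, both lying in the set defining $\Sigma_{t_a,a}$, the $\R^{n-p+1}$-slot $By=B'y'$ (with $B,B'>0$, $y,y'\in\S^{n-p}$) immediately forces $y=y'$ and $B(t,x,y)=B(t',x',y)$; the projected $\R^{p+1}$-slot $A\bar x=A'\bar x'$ then either forces $\bar x=\bar x'=0$ (giving $x=x'=(1,0,\dots,0)$ via $x_1,x_1'>0$) or makes $\bar x,\bar x'$ parallel in $\R^p$ with $\bar x'=(A/A')\bar x$. In the latter case $\bar x=\bar x'$ will imply $A=A'$ and hence $x=x'$, after which $t=t'$ follows from the strict monotonicity of $B$ in $t$; so the whole issue is to rule out $\bar x\ne\bar x'$.

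The rigidity estimate comes from the splitting
\[
0=B(t,x,y)-B(t',x',y)=\int_{t'}^t\partial_sB(s,x,y)\,ds+\bigl(B(t',x,y)-B(t',x',y)\bigr),
\]
in which the first term has absolute value at least $\tfrac12\sqrt{(n-p)/n}|e^t-e^{t'}|$ (from the lower bound on $\partial_sB$) and the second is at most $Ce^{(1-\delta)t'}d_{\S^p}(x,x')$ (mean value theorem plus the bound on $\nabla_xg$); this yields $|e^t-e^{t'}|\le C_1 e^{(1-\delta)t_{\min}}d_{\S^p}(x,x')$. The same splitting for $A$ gives $|A-A'|\le C_2 e^{(1-\delta)t_{\min}}d_{\S^p}(x,x')$, and from $\bar x'=(A/A')\bar x$ one deduces $|\bar x-\bar x'|=|A-A'||\bar x|/A'\le C_3 e^{-\delta t_{\min}}d_{\S^p}(x,x')$. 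Since $\bar x$ and $\bar x'$ are parallel, $x$ and $x'$ sit on a single great circle through $(1,0,\dots,0)$, and an elementary $\S^p$-computation there gives the complementary bound $d_{\S^p}(x,x')\le C_4|\bar x-\bar x'|/\min(x_1,x_1')$.

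Combining these and using the $\Sigma_{t_a,a}$ constraint $\min(x_1,x_1')\ge a/(2e^{t_{\max}}\sqrt{p/n})\gtrsim ae^{-t_{\min}}$ (the ratio $e^{t_{\max}-t_{\min}}$ being bounded by the estimate on $|e^t-e^{t'}|$ above), one obtains
\[
|\bar x-\bar x'|\le \frac{C_5\,e^{-t_{\min}}}{a}\,|\bar x-\bar x'|,
\]
where $e^{-t_{\min}}$ replaces $e^{(1-\delta)t_{\min}}$ because $\delta>2$. Since $t_{\min}\ge t_a=\max(t_0,\ln(c/a))$, the coefficient is bounded by $C_5/c$ in both cases: when $t_a=\ln(c/a)$ one has $e^{-t_a}=a/c$, and when $t_a=t_0$ one has $a\ge ce^{-t_0}$, so $e^{-t_0}/a\le 1/c$. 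Choosing $t_0$ large enough for the asymptotic estimates to hold and then $c>2C_5$ makes this coefficient $<1$, which forces $\bar x=\bar x'$ and finishes the proof. The delicate point, and the reason for the precise form of $t_a$ in the statement, is the degeneration $d_{\S^p}(x,x')\sim|\bar x-\bar x'|/x_1$ near $x_1=0$: the threshold $t_a=\max(t_0,\ln(c/a))$ is exactly what is needed so that the exponential decay in $t_{\min}$ beats this $1/x_1$ blow-up uniformly in $a$.
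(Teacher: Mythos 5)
Your argument is correct and follows essentially the same route as the paper: extract $y=y'$ and $B=B'$ from the second factor, use the $e^t$-monotonicity of the radial coefficient against the $O(e^{(1-\delta)t})$ angular gradient to control $|t-t'|$ by the spherical distance, and then close the loop in the first factor via the hemisphere inequality $|\pi(x)-\pi(x')|\gtrsim \min(x_1,x_1')\,|x-x'|$ together with the constraint $e^t x_1\gtrsim a$, the threshold $t_a=\max(t_0,\ln(c/a))$ making the resulting contraction coefficient less than $1$. The only differences are cosmetic (you exploit the parallelism of $\pi(x),\pi(x')$ and work with geodesic rather than chordal distance), so the proposal matches the paper's proof.
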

\begin{proof}
$t_0$ will be chosen sufficiently large so that $e^{-t_0}$ is small enough with respect to
quantities appearing in the asymptotic behaviour of $g$. We denote $x=(x_1,
\pi(x))$. If the map is not
injective, we have $(t,x,y)$ and $(t',x',y')$ ($t'\ge t$) such that
\begin{gather}\label{eq:proj1}
e^t(\sqrt{\frac pn}+g\sqrt{\frac{n-p}n})\pi(x)= e^{t'}(\sqrt{\frac
pn}+g'\sqrt{\frac{n-p}n})\pi(x')\\ \label{eq:proj2}
e^t(\sqrt{\frac {n-p}n}-g\sqrt{\frac pn})y= e^{t'}(\sqrt{\frac {n-p}n}-g'\sqrt{\frac pn})y'
\end{gather}
with $g=g(t,x,y)$ and $g'=g(t',x',y')$.

From \eqref{eq:proj2}, $y=y'$ and $e^t(\sqrt{\frac {n-p}n}-g\sqrt{\frac pn})=
e^{t'}(\sqrt{\frac {n-p}n}-g'\sqrt{\frac pn})$. So if $h=e^t(\sqrt{\frac
{n-p}n}-f(t)\sqrt{\frac pn})$ and $h'=e^{t'}(\sqrt{\frac
{n-p}n}-f(t')\sqrt{\frac pn})$, we get $h'-h=O(e^{(1-\delta) t})(|t-t'|+|x-x'|)$. We have
$\partial_t h=e^t(\sqrt{\frac{n-p}n}+O(e^{-\frac{3t}2}))\ge \frac1n e^t$ if $t\ge t_0$
large.

So $e^t(t'-t)\le O(e^{(1-\delta)t})(|t-t'|+|x-x'|)$. Then
$$
|t'-t|\le ce^{-\delta t}|x-x'|
$$
for $t'\ge t\ge t_0$ large.

Thus 
$$
e^{t'}(\sqrt{\frac pn}+g'\sqrt{\frac{n-p}n})=e^t(\sqrt{\frac
pn}+g\sqrt{\frac{n-p}n})+ O(e^{(1-\delta)t})|x-x'|
$$
Using this in \eqref{eq:proj1}, we get
$$
e^t\sqrt{\frac pn}|\pi(x-x')|=O(e^{(1-\delta)t})|x-x'|
$$
On the hemisphere $\S^p\cap \{x_1>0\}$, we have $|\pi(x-x')|\ge
\frac{\min(x_1,x_1')}{\sqrt2} |x-x'|$. This implies $e^t\sqrt{\frac
pn}\min(x_1,x_1')|x-x'|=O(e^{(1-\delta)t})|x-x'|$. Thus
$$
a |x-x'|\le ce^{(1-\delta)t}|x-x'|
$$
for $t\ge t_0$ large. Since $\delta>2$ it implies $x=x'$ and then $t=t'$ if 
$$
t\ge\max(t_0,\ln\frac ca)>\frac 1{\delta-1}\ln \frac ca
$$
\end{proof}

This lemma implies that large parts of $\Sigma$ can be described as graphs in the
$x_1$ direction. For $a>0$, we denote by $S_a$ the symmetry with respect to $x_1=a$.

\begin{lem}\label{lem:sym}
There are constants $t_0$, $b>0$ and $c>0$ such that for any $a>0$ the image of
$\Sigma_{t_a,a}$ by $S_a$ does not intersect $\Sigma$ where
$$
t_a=\max (t_0, b\ln \frac ca)
$$
\end{lem}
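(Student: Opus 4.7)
The plan is to parallel the strategy of Lemma~\ref{lem:graph}: I would argue by contradiction, assuming there exists $(t,x,y)$ such that $Y(t,x,y) \in \Sigma_{t_a,a}$ and $S_aY(t,x,y) = Y(t',x',y')$ for another point of $\Sigma$, and derive a quantitative upper bound on $t$ of the form $t \le (1/(\delta-1))\ln(c/a)$ that contradicts $t \ge t_a = \max(t_0, b\ln(c/a))$ for $b = 1/(\delta-1)$ and appropriate $t_0, c$. Writing $Y(t,x,y) = (Fx, Hy)$ with $F = e^t(\sqrt{p/n} + g\sqrt{(n-p)/n})$ and $H = e^t(\sqrt{(n-p)/n} - g\sqrt{p/n})$, the identity $F^2 + H^2 = e^{2t}(1+g^2)$ gives $|S_aY|^2 = e^{2t}(1+g^2) + 4a(a-Fx_1)$, which for $t_0$ sufficiently large dominates the radius of the compact part of $\Sigma$ and forces $(t',x',y')$ into the asymptotic region covered by Proposition~\ref{prop:asymp}.

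Next, the reflection equation $S_aY(t,x,y)=Y(t',x',y')$ splits into three block equations
\[
Fx_1 + F'x_1' = 2a,\qquad F\tilde x = F'\tilde x',\qquad Hy = H'y',
\]
where $\tilde x = (x_2,\ldots,x_{p+1})$. The last yields $y = y'$ and $H = H'$. Using the splitting $g = u(t) + f(t,x,y)$ from Proposition~\ref{prop:asymp}, where $u$ depends only on $t$ and $f$ has spatial derivatives bounded in $\delta$-norm with $\delta > 2$, I would derive $|\nabla_x H| = O(e^{(1-\delta)t})$ while $\partial_t H \sim e^t$. Linearizing $H(t,x,y) = H(t',x',y)$ then yields $|t-t'| \le Ce^{-\delta t}|x-x'|$, and consequently $|F-F'| \le Ce^{-(\delta-1)t}|x-x'|$. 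The middle equation gives $|\tilde x - \tilde x'| \le |F'-F|/F \le Ce^{-\delta t}|x-x'|$, and combined with $|x-x'|^2 = (x_1-x_1')^2 + |\tilde x-\tilde x'|^2$ this forces $|x-x'| \le \sqrt 2\,|x_1-x_1'|$ for $t \ge t_0$.

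The sphere identity on $\S^p$ then gives $|x_1^2 - x_1'^2| = \bigl||\tilde x'|^2 - |\tilde x|^2\bigr| \le 2|\tilde x-\tilde x'| \le C'e^{-\delta t}|x_1-x_1'|$. Either $x_1 = x_1'$, in which case the previous bounds cascade to $x = x'$, $t = t'$, $F = F'$, and the first reflection equation forces $Fx_1 = a$, contradicting $Fx_1 > a$; or $x_1 \ne x_1'$ and $|x_1+x_1'| \le C'e^{-\delta t}$. Rewriting the first reflection equation as $F(x_1+x_1') + (F'-F)x_1' = 2a$ then gives
\[
2a \le F|x_1+x_1'| + |F'-F| \le C''e^{-(\delta-1)t},
\]
and with $b = 1/(\delta-1)$ and $c = C''/2$, this contradicts $t \ge t_a$.

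The hard part will be obtaining the decay $|\nabla_x H| = O(e^{-(\delta-1)t})$ with $\delta > 1$, which is what makes the inequality $2a \le C''e^{-(\delta-1)t}$ impose an upper bound on $t$ rather than a lower bound. This is exactly where the structural splitting $g = u(t) + f(t,x,y)$ from Proposition~\ref{prop:asymp} is essential: the radial part $u(t)$ has vanishing spatial gradient, while the remainder $f$ decays fast enough in $\delta$-norm (with $\delta > 2$) to make $\nabla_x g$ beat the factor $e^t$ hidden in $H$. Without this improvement on the raw asymptotic, the argument would not close.
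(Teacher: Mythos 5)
Your argument is correct, but the core of it is genuinely different from the paper's. The paper, after the same reduction (showing $|S_a(Y(t,x,y))|\ge ce^t$ so the reflected point must lie in the graph region, and extracting $|t'-t|\le ce^{-\delta t}$ from the $y$-block), splits into two cases on the size of $Fx_1=e^t(\sqrt{p/n}+g\sqrt{(n-p)/n})x_1$: for $Fx_1\in[a,3a/2]$ it quotes Lemma~\ref{lem:graph} with $a/2$ in place of $a$ (the reflected point has the same $(\pi,\id)$-projection, so injectivity forces $Fx_1=a$), and for $Fx_1\ge 3a/2$ it evaluates the quadratic form $Q(\alpha,\beta)=(n-p)|\alpha|^2-p|\beta|^2$, which vanishes exactly on the cone: the reflection decreases $Q$ by $4(n-p)a(Fx_1-a)\ge 2(n-p)a^2$, while the asymptotics only allow $Q$ to vary by $O(e^{(2-\delta)t})$ between the two points, giving $a^2\le ce^{(2-\delta)t}$ and hence $b=2/(\delta-2)$. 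You instead work directly with the three block equations and the constraint $x_1^2+|\tilde x|^2=1$, arriving at the dichotomy $x_1=x_1'$ (points coincide, $Fx_1=a$, contradiction) versus $|x_1+x_1'|=O(e^{-\delta t})$, which yields $2a=F(x_1+x_1')+(F'-F)x_1'=O(e^{(1-\delta)t})$. I checked your chain of estimates ($y=y'$, $H=H'$, $|t-t'|\le Ce^{-\delta t}|x-x'|$, $|F-F'|\le Ce^{(1-\delta)t}|x-x'|$, $|\tilde x-\tilde x'|\le Ce^{-\delta t}|x-x'|$, hence $|x-x'|\le\sqrt2|x_1-x_1'|$) and they all go through using the splitting $g=u(t)+f$ and the finiteness of $\|\nabla f\|_\delta$ noted after Proposition~\ref{prop:asymp}. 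Your route buys two things: it only needs $\delta>1$ rather than $\delta>2$ (so it is less dependent on how far the asymptotic expansion has been pushed), and it handles all positions of $Fx_1$ uniformly without re-invoking Lemma~\ref{lem:graph}; the paper's route buys brevity and a geometrically transparent mechanism (the reflected point is pushed strictly to one side of the cone, as measured by $Q$). Two cosmetic points: to get a strict contradiction with $t\ge t_a$ when $t_a=b\ln(c/a)$ you should take $b$ slightly larger than $1/(\delta-1)$ (the paper does the analogous adjustment in Lemma~\ref{lem:graph}); and for the confinement step the bound $|S_aY|\ge H\ge ce^t$ coming from the untouched $y$-block is cleaner than your identity $|S_aY|^2=e^{2t}(1+g^2)+4a(a-Fx_1)$, whose right-hand side requires a further line to bound below.
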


\begin{proof}
As above, $t_0$ will be chosen such that $e^{-t_0}$ is sufficiently small with respect to
quantities appearing in the asymptotic behaviour of $g$. We have
$$S_a(Y(t,x,y))=\begin{pmatrix}
2a-e^t(\sqrt{\frac pn}+g\sqrt{\frac{n-p}n})x_1\\
e^t(\sqrt{\frac pn}+g\sqrt{\frac{n-p}n})\pi(x)\\
e^t(\sqrt{\frac{n-p}n}-g\sqrt{\frac pn})y
\end{pmatrix}
$$
So $|S_a(Y((t,x,y))|\ge e^t(\sqrt{\frac{n-p}n}-g\sqrt{\frac pn})\ge c e^t$ if $t\ge t_0$
large. Thus
$S_a((Y(t,x,y))$ is outside a large ball if $t>t_0$ is large. So we can care
only about the part of $\Sigma$ which is parametrized by the normal graph and
with large $t$: if $S_a(Y(t,x,y))$ is inside $\Sigma$, this point can be written
$Y(t',x',y')$ with $t'\ge t_O$ if $t$ if large.

If $a\le e^t(\sqrt{\frac pn}+g\sqrt{\frac{n-p}n})x_1\le 3a/2$, it is clear that
$S_a(Y(t,x,y))$ is not in $\Sigma$ because of Lemma~\ref{lem:graph} applied with
$a/2$ in place of $a$.

Now we assume that $e^t(\sqrt{\frac pn}+g\sqrt{\frac{n-p}n})x_1\ge 3a/2$ and we have
\begin{equation}\label{eq:Sa=Y}
S_a(Y(t,x,y))=Y(t',x',y')
\end{equation}
For $(\alpha,\beta)\in\R^{p+1}\times\R^{n-p+1}$, let
$Q(\alpha,\beta)=(n-p)|\alpha|^2-p|\beta|^2$. We have
$Q(S_a(Y(t,x,y)))=Q(Y(t',x',y'))$ thus
\begin{align*}
e^{2t'}(2g'\sqrt{p(n-p)}+g'^2(n-2p))= &(n-p)4a(a-e^t(\sqrt{\frac pn}
+g\sqrt{\frac{n-p}n})x_1)\\
&+e^{2t}(2g\sqrt{p(n-p)}+g^2(n-2p))
\end{align*}
Using $e^t(\sqrt{\frac pn}+g\sqrt{\frac{n-p}n})x_1\ge 3a/2$, this gives
\begin{equation}\label{eq:truc}
e^{2t'}(2g'\sqrt{p(n-p)}+g'^2(n-2p)) -
e^{2t}(2g\sqrt{p(n-p)}+g^2(n-2p))\le -(n-p)2a^2
\end{equation}
From \eqref{eq:Sa=Y}, we also have
$$
e^t(\sqrt{\frac{n-p}n}-g\sqrt{\frac pn})y=
e^{t'}(\sqrt{\frac{n-p}n}-g'\sqrt{\frac pn})y'
$$
As in the Lemma~\ref{lem:graph}, this gives $|t'-t|\le ce^{-\delta t}$ if $t\ge t_0$
large. Using this in \eqref{eq:truc}, we finally get
$$
ce^{(2-\delta)t}\ge (n-p)2a^2
$$
Lemma~\ref{lem:sym} is then proved since $\delta>2$.
\end{proof}

Now we can apply the Alexandrov reflection procedure to prove the following
result.

\begin{lem}
The surface $\Sigma$ is symmetric with respect to $\{x_1=0\}$.
\end{lem}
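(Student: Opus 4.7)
My plan is to apply the Alexandrov moving-planes method to the family of hyperplanes $P_a=\{x_1=a\}$, sliding $a$ from $+\infty$ down toward the hyperplane of symmetry. For $a\in\R$, write $\Sigma^+_a=\Sigma\cap\{x_1>a\}$ and let $S_a$ denote the reflection across $P_a$. The starting condition is clear from the asymptotic description: since $\Sigma$ is a normal graph over $C_{n,p}$ outside a compact set, for all $a$ sufficiently large $\Sigma^+_a$ is entirely contained in the image of the parametrization $Y$ from Proposition~\ref{prop:asymp}, and in fact $\Sigma^+_a\subseteq\Sigma_{t_a,a}$ for the $t_a$ of Lemma~\ref{lem:sym}. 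That lemma gives $S_a(\Sigma^+_a)\cap\Sigma=\emptyset$ for such $a$.

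Now decrease $a$ and set
$$
a_0=\inf\{a\in\R\,:\,S_{a'}(\Sigma^+_{a'})\subseteq\{x_1\le a'\}\ \text{and}\ S_{a'}(\Sigma^+_{a'})\cap\Sigma\subseteq P_{a'},\ \forall a'\ge a\}.
$$
Lemma~\ref{lem:sym} rules out a loss of the reflection property coming from infinity as $a\searrow a_0$: any contact point escaping to infinity would, for small perturbations of $a$ near $a_0$, have to lie inside $\Sigma_{t_{a_n},a_n}$, contradicting the lemma. Hence $a_0$ is finite and at $a=a_0$ there is a genuine contact point between $S_{a_0}(\Sigma^+_{a_0})$ and $\Sigma$, either in the interior (two tangent minimal hypersurfaces with the same unit normal) or on $P_{a_0}$ (where $\Sigma$ meets $P_{a_0}$ tangentially). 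The strong maximum principle together with the Hopf boundary-point lemma in the boundary case forces $S_{a_0}(\Sigma^+_{a_0})$ and $\Sigma\cap\{x_1\le a_0\}$ to coincide, so $\Sigma$ is invariant under $S_{a_0}$.

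It remains to show $a_0=0$, and here the fine asymptotic of Proposition~\ref{prop:asymp} is essential. Consider $\Sigma'=\Sigma-a_0e_1$, which is again a minimal hypersurface asymptotic to $C_{n,p}$ and is now invariant under $S_0$. Apply Proposition~\ref{prop:asymp} to $\Sigma'$: there is a translation vector $X_0'$, uniquely determined because the kernel eigenvalues $\lambda_{1,0,+}=\lambda_{0,1,+}=-1$ associated to translations are isolated in the spectral decomposition of $L$, such that $\Sigma'-X_0'$ has graph function of the form $u(t)+f(t,x,y)$ with $u$ depending only on $t$ and $\|f\|_\delta<+\infty$ for some $\delta>2$. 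On the one hand, since $\Sigma$ itself was already normalized in this way, we must have $X_0'=-a_0e_1$. On the other hand, applying $S_0$ to $\Sigma'-X_0'$ produces $\Sigma'-S_0(X_0')$, which is also a normalization, so uniqueness gives $S_0(X_0')=X_0'$, i.e., the first coordinate of $X_0'$ vanishes. The two conclusions force $a_0=0$, proving that $\Sigma$ is symmetric with respect to $\{x_1=0\}$.

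The step I expect to be the main obstacle is the rigorous verification at the critical value $a=a_0$ that contact cannot escape to infinity: this requires using Lemma~\ref{lem:sym} uniformly for $a$ in a small neighborhood of $a_0$, combined with a properness argument on the finite part of $\Sigma$, and crucially relies on the strict inequality $\delta>2$ in Proposition~\ref{prop:asymp} to close the estimate.
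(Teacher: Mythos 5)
Your opening moves match the paper's: start the plane at $x_1=a$ with $a$ large (Lemma~\ref{lem:sym} gives the disjointness there), observe that for $a>0$ any intersection of $S_a(\Sigma_a)$ with $\Sigma$ must occur in the bounded set $\Sigma_a\setminus\Sigma_{t_a,a}$, and invoke the interior/boundary maximum principle at a first contact. But there is a genuine gap at the critical position. Your argument hinges on the claim that at $a=a_0$ there is a \emph{finite} contact point, and you justify the non-escape to infinity by Lemma~\ref{lem:sym}. That lemma is only stated (and only provable) for $a>0$, whereas the case that actually occurs is $a_0=0$: for $a_0>0$ your own step 3 (or a direct appeal to Lemma~\ref{lem:sym}) rules out symmetry about $P_{a_0}$, so you are left needing a contact point for the plane $\{x_1=0\}$, produced as a limit of intersection points at levels $a\le 0$ where Lemma~\ref{lem:sym} says nothing. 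At $a=0$ the two sheets $S_0(\Sigma_0)$ and $\Sigma\cap\{x_1<0\}$ are asymptotic to one another (the leading term $u(t)$ of the graph function is rotationally invariant), so they may perfectly well be disjoint at every finite point; then neither the strong maximum principle nor the Hopf lemma has anything to bite on, and your proof stalls exactly where the work remains to be done.

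The paper closes this gap differently. It first shows that \emph{no} contact occurs for any $a>0$: a first contact at $a_0>0$ would force $S_{a_0}(\Sigma_{a_0})\subset\Sigma$ by the maximum principle, contradicting Lemma~\ref{lem:sym}. Hence $\Sigma\cap\{x_1>0\}$ is a graph in the $x_1$ direction and $\Sigma\cap\{x_1<0\}$ lies on one side of $S_0(\Sigma_0)$. Running the same argument on $S_0(\Sigma)$ (which has the same asymptotics) shows the other half is also an $x_1$-graph. Since the first coordinate of the normal is asymptotically $\sqrt{\tfrac{n-p}{n}}\,x_1$ up to lower-order terms, it changes sign on $\Sigma$, and because it cannot vanish on either open graph it must vanish at a point of $\Sigma\cap\{x_1=0\}$; there $\Sigma$ meets $\{x_1=0\}$ orthogonally, $S_0(\Sigma)$ is tangent to $\Sigma$ from one side, and the boundary maximum principle yields $S_0(\Sigma)=\Sigma$. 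Your step 3 --- identifying the symmetry plane via uniqueness of the normalizing translation $X_0$ in Proposition~\ref{prop:asymp} --- is a pleasant alternative to locating the plane, and the uniqueness claim is correct since a residual translation would reintroduce an $e^{-t}(V,N)$ mode decaying slower than $e^{-3t/2}$; but it only operates once symmetry about \emph{some} plane is established, which is precisely the part your argument does not deliver.
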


\begin{proof}
First we denote by $\Sigma_a=\Sigma\cap\{x_1>a\}$. Let also $t_a$ be given by
Lemma~\ref{lem:sym}. If $a>0$ is large, $\Sigma_a$ is a subset of the part of
$\Sigma$ which is a normal graph. Besides $|Y(t,x,y)|\ge a$ so $t$ is large on
$\Sigma_a$ if $a$ is large. This implies that for $a$ sufficiently large
$\Sigma_a=\Sigma_{t_a,a}$. So from Lemma~\ref{lem:sym}, $S_a(\Sigma_a)\cap
\Sigma=\emptyset$ for $a$ large.

For any $a>0$, $\Sigma_a\setminus \Sigma_{t_a,a}$ is a bounded subset, so if
there is some $a'>0$ such that $S_{a'}(\Sigma_{a'})\cap \Sigma \neq\emptyset$,
there is a first contact point between $S_a(\Sigma_a)$ and $\Sigma$. There is
$a_0>0$ such that one of the following two possibilities occurs: there is $p_0\in
\Sigma\cap S_{a_0}(\Sigma_{a_0})$ such that $S_{a_0}(\Sigma_{a_0})$ lies on one side of
$\Sigma$ near $p_0$ or there is $p_0\in \partial \Sigma_{a_0}$ such that $\Sigma$ is
normal to $\{x_1=a_0\}$ at $p_0$ and $S_{a_0}(\Sigma_{a_0})$ lies on one side of $\Sigma$
near $p_0$. In both cases, $\Sigma$ and $S_{a_0}(\Sigma_{a_0})$ can be described near $p_0$
as graphs over $T_{p_0}\Sigma$. In the first case, applying the maximum principle at $p_0$,
we get $S_{a_0}(\Sigma_{a_0})\subset\Sigma$ which is not possible by
Lemma~\ref{lem:sym}. In the second case, the boundary maximum principle can be applied at
the boundary point $p_0$ to get the same contradiction (see \cite{Ale}).

This implies that $\Sigma_0$ is a graph in the $x_1$ direction and
$\Sigma\cap\{x_1<0\}$ lies on one side of $S_0(\Sigma_0)$ in $\{x_1<0\}$. We
notice that $S_0(\Sigma)$ has the same asymptotic behaviour as $\Sigma$. Thus,
applying the same argument to $S_0(\Sigma)$, we get that $\Sigma\cap\{x_1<0\}$
is also a graph in the $x_1$ direction. Now because of the asymptotic behaviour
of $\Sigma$ the first coordinate of the normal to $\Sigma$ changes its sign. So
$\Sigma$ is normal to $\{x_1=0\}$ and the maximum principle implies that
$S_0(\Sigma)=\Sigma$.
\end{proof}


\section{Minimal hypersurfaces invariant by $\Onp$}
\label{sec:ode}


In order to finish the proof of Theorem~\ref{th:main}, we need to understand all
the minimal hypersurfaces that are invariant by $\Onp$. This study has been
partially done by Bombieri, de Giorgi and Giusti in \cite{BoGiGi} in the case
$n=2p\ge 6$. It has been completed by Alencar, Barros, Palmas, Reyes and Santos
\cite{ABPRS}; here for sake of completeness, we write the part of the study
which is necessary for our result. We want to prove that up to homotheties there
is two minimal hypersurfaces invariant by $\Onp$ with $C_{n,p}$ as limit cone.

So we are looking for a pair of functions $a$, $b$ defined on an interval $I$
such that the hypersurface parametrized by
$$
X:I\times \S^p\times\S^{n-p}\longrightarrow \R^{n+2}; (t,x,y)\longmapsto
(a(t)x,b(t)y)
$$
is minimal.

The hypersurface is minimal if $a$ and $b$ satisfy to a certain ordinary
differential equation:
$$
0=a''b'-b''a'+(a'^2+b'^2)\Big((n-p)\frac{a'}b-p\frac{b'}a\Big)
$$
Since being minimal is invariant by homotheties, $(\lambda a,\lambda b)$ is a
solution if $(a,b)$ is a solution. In order to use this property we introduce
new parameters by these expressions:
\begin{align*}
(a,b)&=e^\rho(\cos\theta,\sin\theta)\\
(a',b')&=e^r(\cos\phi,\sin\phi)
\end{align*}
The above ode is then equivalent to
$$
\begin{cases}
\rho'&=e^{r-\rho}\cos(\theta-\phi)\\
\theta'&=-e^{r-\rho}\sin(\theta-\phi)\\
\phi'&=e^{r-\rho}\frac{(n-2p)\cos(\theta-\phi)+n\cos(\theta+\phi)}{\sin2\theta}
\end{cases}
$$
So, changing the time parameter, we get the following system
$$
\begin{cases}
\rho'&=\sin2\theta\cos(\theta-\phi)\\
\theta'&=-\sin2\theta\sin(\theta-\phi)\\
\phi'&=(n-2p)\cos(\theta-\phi)+n\cos(\theta+\phi)
\end{cases}
$$
So we are let to understand the flow lines of
$$
(\theta,\phi)'=Y(\theta,\phi)=(-\sin2\theta\sin(\theta-\phi),(n-2p)
\cos(\theta-\phi)+n\cos(\theta+\phi))
$$
We denote by $Y_1$ and $Y_2$ the two components of the vectorfield. First we
remark that $Y_1(k\pi/2,\phi)=0$ so the subsets $\{k\pi/2\le\theta\le(k+1)\pi/2\}$
are stable. Moreover, we have $Y(\theta+\pi,\phi)=-Y(\theta,\phi)$,
$Y(\theta,\phi+\pi)=-Y(\theta,\phi)$ and $Y(-\theta,-\phi)=Y(\theta,\phi)$. So
we need to understand the vectorfield on $[0,\pi/2]\times(-\pi/2,\pi/2]$.

In this subset, the singular points are the following 
\begin{itemize}
\item a saddle point $(\pi/2,0)$ with stable direction $(0,1)$ and unstable one
$(p+1,p-n)$,
\item a saddle point $(0,\pi/2)$ with stable direction $(0,1)$ and unstable one
$(n+1-p,-p)$ and
\item a stable nodal or focal point $(\theta_0,\theta_0)$ where
$\theta_0\in(0,\pi/2)$ satisfies $\cos\theta_0=\sqrt{\frac pn}$ (if $n\le 6$,
the roots of $dY(\theta_0,\theta_0)$ are conjugate complex numbers with negative
real parts and, if $n\ge 7$ the roots are negative real numbers).
\end{itemize}

The properties of the vectorfield $Y$ are summarized in the following
proposition (see also Figure~\ref{fig:vectorfield}).

\begin{prop}
The vectorfield $Y$ satisfies to the following properties :
\begin{itemize}
\item if $\phi\in(-\pi/2,0)$, $Y_2(\theta,\phi)> 0$ for all $\theta\in
[0,\pi/2]$ and
\item there is a continuous decreasing surjective function
$\tau:[0,\theta_0]\rightarrow[0,\pi/2]$ such that $\phi \mapsto \phi+\tau(\phi)$ decreases
and $Y$ points inside $[\phi,\phi+\tau(\phi)]^2$ along its boundary for
$\phi\in[0,\theta_0)$. 
\end{itemize}
\end{prop}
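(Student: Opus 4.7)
First I would expand $Y_2$ using the addition formulas $\cos(\theta\pm\phi) = \cos\theta\cos\phi \mp \sin\theta\sin\phi$, yielding
\[
Y_2(\theta,\phi) = 2(n-p)\cos\theta\cos\phi - 2p\sin\theta\sin\phi.
\]
For $\phi\in(-\pi/2,0)$ and $\theta\in[0,\pi/2]$ one has $\cos\phi>0$, $-\sin\phi>0$, and $\cos\theta$, $\sin\theta$ are non-negative and never simultaneously zero; since $1\le p\le n-1$ both coefficients are strictly positive, so at least one summand is strictly positive. This gives the first item.

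For the second item, the zero set $\{Y_2=0\}$ inside $[0,\pi/2]^2$ is, for $\phi\in(0,\pi/2)$, the curve $\tan\theta\tan\phi = (n-p)/p$, i.e.\ the graph of
\[
\theta^*(\phi)=\arctan\Bigl(\tfrac{n-p}{p\tan\phi}\Bigr),
\]
extended by $\theta^*(0)=\pi/2$ and $\theta^*(\pi/2)=0$. A direct computation shows that $\theta^*\colon[0,\pi/2]\to[0,\pi/2]$ is a continuous strictly decreasing involution whose unique fixed point is $\theta_0$ (since $\tan^2\theta_0 = (n-p)/p$ is equivalent to $\cos\theta_0=\sqrt{p/n}$), and the factorisation of $Y_2$ shows that $Y_2(\theta,\phi)>0$ iff $\theta<\theta^*(\phi)$. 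I then set
\[
\tau(\phi):=\theta^*(\phi)-\phi,\qquad \phi\in[0,\theta_0].
\]
All properties of $\tau$ are immediate from those of $\theta^*$: continuity and strict monotonicity are inherited, $\tau(0)=\pi/2$ and $\tau(\theta_0)=0$ give surjectivity, and $\phi+\tau(\phi)=\theta^*(\phi)$ is strictly decreasing.

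It remains to verify that along the boundary of the square $B_{\phi_0}:=[\phi_0,\theta^*(\phi_0)]^2$, with $\phi_0\in[0,\theta_0)$, the vectorfield $Y$ points (weakly) inward. On the two vertical edges, the expression $Y_1(\theta,\phi)=-\sin(2\theta)\sin(\theta-\phi)$ has the correct sign because $\sin(2\theta)\ge 0$ on $[0,\pi/2]$ and the factors $\sin(\phi-\phi_0)$ on the left edge and $\sin(\theta^*(\phi_0)-\phi)$ on the right edge are non-negative (note $\tau(\phi_0)\le\pi/2$, so no arguments exceed $\pi/2$). For the horizontal edges I use the identity $\partial_\theta Y_2=-2(n-p)\sin\theta\cos\phi-2p\cos\theta\sin\phi\le 0$ on $[0,\pi/2]^2$: the map $\theta\mapsto Y_2(\theta,\phi_0)$ decreases monotonically from a positive value at $\theta=\phi_0$ (because $\phi_0<\theta_0$ gives $\tan^2\phi_0<(n-p)/p$) to $0$ at $\theta=\theta^*(\phi_0)$, so $Y_2\ge 0$ on the bottom edge; similarly $\theta\mapsto Y_2(\theta,\theta^*(\phi_0))$ decreases from $0$ at the corner $(\phi_0,\theta^*(\phi_0))\in\{Y_2=0\}$ to a negative value at $\theta=\theta^*(\phi_0)>\theta_0$, so $Y_2\le 0$ on the top edge.

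The argument is essentially a trigonometric exercise; the only non-routine step is guessing the right family of boxes, namely observing that the nullcline $\theta=\theta^*(\phi)$ of $Y_2$ is an involution whose fixed point is precisely the singular point $(\theta_0,\theta_0)$. Once this is noticed, the four boundary checks reduce to elementary sign inspections.
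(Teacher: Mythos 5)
Your proof is correct and follows essentially the same route as the paper: both arguments rest on the sign of $Y_1$ being that of $\phi-\theta$, the symmetry of the nullcline $\{Y_2=0\}$ about the diagonal (your involution $\theta^*$ is exactly the paper's relation $Y_2(\theta,\phi)=Y_2(\phi,\theta)$ combined with its formula $\tau(\phi)=\arctan\bigl(\frac{\cos2\phi-\cos2\theta_0}{\sin2\phi}\bigr)$), and the monotonicity of $Y_2$ in $\theta$ on $[0,\pi/2]^2$. The only cosmetic difference is that you parametrize the nullcline via $\tan\theta\tan\phi=(n-p)/p$ rather than solving $Y_2(\phi+\tau,\phi)=0$ for $\tau$ directly.
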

\begin{proof}
We have $Y_2(\theta,\phi)=2(n-p)\cos\phi\cos\theta-2p\sin\phi\sin\theta$, so the first
property is clear.

For the second property, we first notice that, for $\theta,\phi\in[0,\pi/2]$,
$Y_1(\theta,\phi)$ has the same sign as $\phi-\theta$. We remark also that
$Y_2(\theta,\phi)=Y_2(\phi,\theta)$. Moreover 
\begin{align*}
Y_2(\phi+\tau,\phi)&=( n-2p +n\cos2\phi)\cos \tau-n\sin 2\phi\sin \tau\\
&=n(\cos2\phi-\cos 2\theta_0)\cos \tau-n\sin2\phi\sin \tau
\end{align*}
So, for $\phi\in[0,\theta_0]$, $\tau\mapsto Y_2(\phi+\tau,\phi)$ is non increasing for
$\tau\in[0,\pi/2]$. It vanishes for $\tau=\tau(\phi)=\arctan(\frac{\cos
2\phi-\cos2\theta_0}{\sin2\phi})$. Thus it is non negative for $\tau\in[0,\tau(\phi)]$.
When $\psi\ge \theta_0$, $\tau\mapsto Y_2(\psi+\tau,\psi)$ is non increasing for
$\tau\in[-\pi/2,0]$. Since $Y_2(\phi,\phi+\tau(\phi))=Y_2(\phi+t(\phi),\phi)=0$, it
implies that $Y_2(\phi+t(\phi)+\tau,\phi+\tau(\phi))$ is non positive for
$\tau\in[-\tau(\phi),0]$. The fact that $\phi\mapsto \phi+\tau(\phi)$ is decreasing is just
a computation. This finishes the proof of the second item.
\end{proof}

We notice that $\theta_0+\tau(\theta_0)=\theta_0$ so $\phi+\tau(\phi)\ge \theta_0$ for any
$\phi\in[0,\theta_0]$.

The above properties are sufficient to describe all the integral curves of $Y$
passing trough a point in $(\theta,\phi)\in(0,\pi/2)\times(-\pi/2,\pi/2)$. We
have four possibilities :
\begin{itemize}
\item an integral curve starting from $(\theta_0,\theta_0-\pi)$ and ending at
$(\theta_0,\theta_0)$,
\item an integral curve starting from $(\theta_0,\theta_0+\pi)$ and ending at
$(\theta_0,\theta_0)$,
\item the unstable manifold starting from $(\pi/2,0)$ and ending at
$(\theta_0,\theta_0)$ or
\item the unstable manifold starting from $(0,\pi/2)$ and ending at
$(\theta_0,\theta_0)$.
\end{itemize}

The behaviour of $\theta$ and $\phi$ along the unstable manifolds close to
$(\pi/2,0)$ and $(0,\pi/2)$ implies that $\rho$ has a limit when the time
parameter goes to $-\infty$. This implies that these two integral curves
generate minimal hypersurfaces that extend smoothly near this endpoint.

The behaviour of $\theta$ and $\phi$ near $(\theta_0,\theta_0)$ (and also
$(\theta_0,\theta_0\pm\pi)$) implies that $\rho$ grows linearly when the time
parameter is close to $\pm\infty$. This implies that all these integral curves
generate proper minimal hypersurfaces whose asymptotic behaviour is given by
twice the cone $C_{n,p}$ in the first two cases and once the cone $C_{n,p}$ in
the last two cases.

Since we study minimal hypersurfaces asymptotic to once the cone $C_{n,p}$,
there is only two possibilities that correspond to the two unstable manifolds
(when $n=2p$, extra symmetries of $Y$ implies that the two integral curves
are symmetric to each other). Moreover, we know that the density at infinity of
$C_{n,p}$ is less than $2$ so these two hypersurfaces are embedded. These two
hypersurfaces are precisely the hypersurfaces $\Sigma_{n,p,\pm}$ that appear in the
statement of Theorem~\ref{th:main}. 

\begin{figure}[h]
\begin{center}
\resizebox{0.5\linewidth}{!}{\input{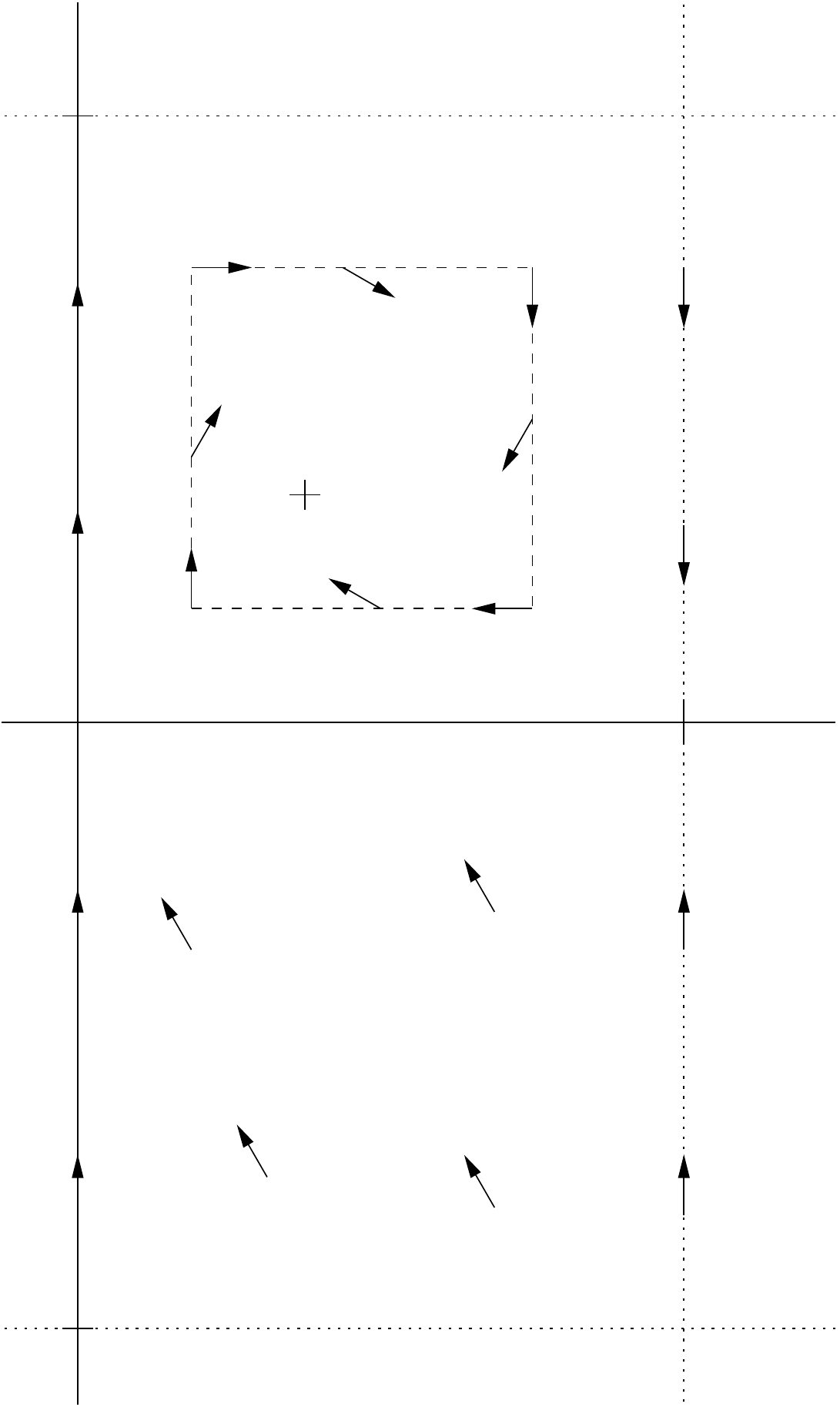_t}}
\caption{The vector field $Y$ in $[0,\pi/2]\times[-\pi/2,\pi/2]$
\label{fig:vectorfield}}
\end{center}
\end{figure}


\appendix


\section{An ODE lemma}
\label{ap:ode}


In this appendix we prove the following lemma about solutions of linear ode.

\begin{lem}\label{lem:eqdiff}
Let $\lambda,\mu\in\C$ with $\lambda\neq \mu$. Let $g$ and $f$ be two smooth functions on
$\R_+$ such that
$$
g''-(\lambda+\mu)g'+\lambda\mu g=f
$$
We assume that $\|f\|_\delta$ is finite where $\delta\neq -\Re(\lambda),-\Re(\mu)$. Then $g$
can be written
$g(t)=ae^{\lambda t}+be^{\mu t}+v(t)$ with the following estimates:
\begin{align*}
\max(|a|,|b|)&\le c\frac{(2+|\lambda|^2+|\mu|^2)^{1/2}}{|\lambda-\mu|}
(|g(0)|+|g'(0)|)\\
&\quad\quad\quad+2\frac{\max(0,(\delta+\Re(\lambda))^{-1},(\delta+
\Re(\mu))^{-1})}{|\lambda-\mu|}\|f\|_\delta\\
\|v\|_\delta&\le \frac{|\delta+\Re(\lambda)|^{-1}+|\delta+
\Re(\mu)|^{-1}}{|\lambda-\mu|}\|f\|_\delta
\end{align*}
for some universal constant $c>0$.
\end{lem}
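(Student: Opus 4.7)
The plan is standard variation of parameters, tailored so that the particular solution has the desired weighted norm. Since $\lambda\neq\mu$, the differential operator factors as $(D-\lambda)(D-\mu)$, the homogeneous space is spanned by $e^{\lambda t}$ and $e^{\mu t}$, and the Wronskian equals $(\mu-\lambda)e^{(\lambda+\mu)t}$. Every solution $g$ therefore admits a writing $g(t)=ae^{\lambda t}+be^{\mu t}+v(t)$, where $v$ is a particular solution that I will choose of the form
$$
v(t)=\frac{1}{\lambda-\mu}\bigl(I_\lambda(t)-I_\mu(t)\bigr),\qquad I_\kappa(t)=\int_{T_\kappa}^{t}e^{\kappa(t-s)}f(s)\,ds,
$$
for an endpoint $T_\kappa\in\{0,+\infty\}$ selected below.

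The key step is choosing the endpoints so that $v$ is automatically $O(e^{-\delta t})$. If $\delta+\Re(\kappa)>0$, I set $T_\kappa=+\infty$; if $\delta+\Re(\kappa)<0$, I set $T_\kappa=0$. In either case a direct estimate using $|f(s)|\le\|f\|_\delta e^{-\delta s}$ gives
$$
|I_\kappa(t)|\le\frac{\|f\|_\delta}{|\delta+\Re(\kappa)|}\,e^{-\delta t},
$$
since the factor $e^{\Re(\kappa)(t-s)}e^{-\delta s}$ is integrable on the chosen half-line and evaluates to exactly this quantity up to constant. Combining the two contributions and dividing by $|\lambda-\mu|$ yields the claimed bound on $\|v\|_\delta$.

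The constants $a$ and $b$ are then pinned down by the initial conditions: the linear system
$$
\begin{pmatrix}1 & 1\\ \lambda & \mu\end{pmatrix}\begin{pmatrix}a\\ b\end{pmatrix}=\begin{pmatrix}g(0)-v(0)\\ g'(0)-v'(0)\end{pmatrix}
$$
has determinant $\mu-\lambda$, and inverting it gives
$$
|a|,|b|\le c\,\frac{(2+|\lambda|^2+|\mu|^2)^{1/2}}{|\lambda-\mu|}\bigl(|g(0)-v(0)|+|g'(0)-v'(0)|\bigr).
$$
To estimate $v(0)$ and $v'(0)$ I observe that when $T_\kappa=0$ both $I_\kappa(0)$ and the boundary-differentiation contribution at $t=0$ vanish, so only the terms with $T_\kappa=+\infty$ (i.e.\ those with $\delta+\Re(\kappa)>0$) contribute. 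For such terms, differentiating under the integral,
$$
I_\kappa'(t)=\kappa I_\kappa(t)-f(t)\ \text{(or }+f(t)\text{)},
$$
and the $f(0)$ terms cancel in the combination $I_\lambda-I_\mu$ (when both endpoints are $+\infty$) or are absorbed into the $(2+|\lambda|^2+|\mu|^2)^{1/2}$ factor. A routine bookkeeping then produces the $\max(0,(\delta+\Re(\lambda))^{-1},(\delta+\Re(\mu))^{-1})\|f\|_\delta/|\lambda-\mu|$ contribution in the bound for $\max(|a|,|b|)$; the $\max$ with $0$ precisely encodes that only the $T_\kappa=+\infty$ integrals affect the initial data of $v$.

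There is no genuine obstacle here, only careful bookkeeping of the universal constants and correct placement of absolute values on the complex quantities $\lambda,\mu,\lambda-\mu$. The only mild subtlety is to handle uniformly the four possible sign combinations of $\delta+\Re(\lambda)$ and $\delta+\Re(\mu)$, which is automatic once the endpoints are chosen as above.
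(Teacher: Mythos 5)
Your proof is correct and follows essentially the same route as the paper: the paper writes $g$ via the Duhamel formula anchored at $0$ and then shifts the lower limit of $\int_0^t f(u)e^{-\kappa u}\,du$ to $+\infty$ exactly when $\delta+\Re(\kappa)>0$, which produces the same particular solution $v$ and the same correction $\frac{1}{\lambda-\mu}\int_0^{+\infty}f(u)e^{-\kappa u}\,du$ to the coefficients that your choice of endpoints $T_\kappa$ encodes directly. The only point to be careful about in your bookkeeping is that $I_\kappa'(0)$ contains the term $f(0)$ even when $T_\kappa=0$; it is the cancellation of these $f(0)$ terms in $v'=\frac{1}{\lambda-\mu}(\lambda I_\lambda-\mu I_\mu)$ that makes $v(0)=v'(0)=0$ when both endpoints are $0$, as needed for the stated bound on $\max(|a|,|b|)$.
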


\begin{proof}
As a solution of such an ode, $g$ can be written
$$
g(t)=ae^{\lambda t}+be^{\mu t}+\frac1{\lambda-\mu}\int_0^tf(u)(e^{\lambda(t-u)}-
e^{\mu(t-u)})du $$
with $a$ and $b$ solution of 
$$
\begin{cases}
g(0)&=a+b\\
g'(0)&=\lambda a+\mu b
\end{cases}.
$$
So $\begin{pmatrix}a\\ b\end{pmatrix}=\frac1{\mu-\lambda}
\begin{pmatrix}\mu&-1\\ \lambda&1 \end{pmatrix} \begin{pmatrix}g(0)\\
g'(0)\end{pmatrix}$ so 
$$
\max(|a|,|b|)\le
c\frac{(2+|\lambda|^2+|\mu|^2)^{1/2}}{|\lambda-\mu|}(|g(0)|+|g'(0)|).
$$
If $\delta+\Re(\lambda)>0$
\begin{align*}
\int_0^tf(u)e^{-\lambda u}du&=\int_0^{+\infty}f(u)e^{-\lambda u}du+\int_{+\infty}^t
f(u)e^{-\lambda u}du\\
&=A+\int_{+\infty}^tf(u)e^{-\lambda u}du
\end{align*}
with $|A|\le \int_0^{+\infty}\|f\|_\delta e^{-(\delta+\Re(\lambda))u}du\le
\frac{\|f\|_\delta}{\delta+\Re(\lambda)}$ and
$$
\Big|\int_{+\infty}^t f(u)e^{-\lambda u}du\Big|\le
\frac{\|f\|_\delta}{\delta+\Re(\lambda)}e^{-(\delta+\Re(\lambda))t}.
$$
If $\delta+\Re(\lambda)<0$, we have:
$$
\Big|\int_0^t f(u)e^{-\lambda u}du\Big|\le
\frac{\|f\|_\delta}{-(\delta+\Re(\lambda))}e^{-(\delta+\Re(\lambda))t}.
$$
This finally gives $g=ae^{\lambda t}+be^{\mu t}+v$ with the expected estimates.
\end{proof}


\section{A flux computation}
\label{ap:estim}


In this appendix, we make the computation of the flux used in the proof of
Proposition~\ref{prop:asymp}. So we use some notation introduced in this proof.

We are in the case $n=2$, so the hypersurface $\Sigma$ is parametrized by
$$
Y(t,\theta,\phi)=e^t\big(R(\theta,\phi)+w(t,\theta,\phi)N(\theta,\phi)\big)
$$
where
$$
R(\theta,\phi)=\frac1{\sqrt2}\begin{pmatrix}\cos\theta\\ \sin\theta\\ \cos\phi\\
\sin\phi\end{pmatrix}\quad\textrm{ and }\quad N(\theta,\phi)=
\frac1{\sqrt2}\begin{pmatrix}\cos\theta\\ \sin\theta\\ -\cos\phi\\
-\sin\phi\end{pmatrix}
$$
Moreover, we notice that $w$ and $w_t$ are $O(e^{-\frac{3t}2})$ and $w_\theta$
and $w_\phi$ are $O(e^{-2t})$. We also define
$$
E_\theta(\theta,\phi)=\begin{pmatrix}-\sin\theta\\ \cos\theta\\
0\\0\end{pmatrix}\quad\textrm{ and }\quad E_\phi(\theta,\phi)=
\begin{pmatrix}0\\ 0\\ -\sin\phi\\ \cos\phi\end{pmatrix}
$$
We notice that $R,E_\theta,E_\phi,N$ is an oriented orthonormal basis.
We have
\begin{align*}
Y_t&=e^t\big(R+(w+w_t)N\big)\\
Y_\theta&=e^t\big(\frac{1+w}{\sqrt2}E_\theta+w_\theta N\big)\\
Y_\phi&=e^t\big(\frac{1-w}{\sqrt2}E_\phi+w_\phi N\big)
\end{align*}

So the cross product of $X_t$, $X_\theta$ and $X_\phi$ is
\begin{align*}
\bigwedge(Y_t,Y_\theta,Y_\phi)&=e^{3t}(\frac{1-w^2}2N-\frac{1-w}{\sqrt2}w_\theta
E_\theta-\frac{1+w}{\sqrt2}w_\phi E_\phi-\frac{1-w^2}2(w+w_t)R)\\
&=e^{3t}(\frac12N-\frac{w+w_t}2R-\frac{w_\theta}{\sqrt2}E_\theta-
\frac{w_\phi}{\sqrt2}E_\phi+O(e^{-\frac{5t}2}))
\end{align*}
So the unit normal $n(t,\theta,\phi)$ to the graph has the following expression
$$
n=N-(w+w_t)R-\sqrt2 w_\theta E_\theta-\sqrt2 w_\phi E_\phi+O(e^{-\frac{5t}2})
$$ 
Then to get an expression of the normal $\nu$ to the boundary of $\Ome_t$, we compute
$$
\bigwedge(n,Y_\theta,Y_\phi)=e^{2t}(-\frac12R-\frac{w+w_t}2N+O(e^{-\frac{5t}2}))
$$
So $\nu=-R-(w+w_t)N+O(e^{-\frac{5t}2})$. Besides the surface element along
$\partial\Ome_t$ can be estimated by $e^{2t}(\frac12+O(e^{-3t}))d\theta d\phi$.
So the flux $F$ of $\nu$ is given by
\begin{align*}
F&=e^{2t}\int_0^{2\pi}\int_0^{2\pi}(-R-(w+w_t)N+O(e^{-\frac{5t}2}))(\frac12+
O(e^{-3t}))d\theta d\phi\\
&=\frac{e^{2t}}2\int_0^{2\pi}\int_0^{2\pi}-R-(w+w_t)N+O(e^{-\frac{5t}2}) d\theta
d\phi
\end{align*}
We notice that the integral of $R$ and $N$ vanishes, so because of the
expression of $w$ we get the following estimates
\begin{equation}\label{eq:estim}
\begin{split}
F&=\frac{e^{2t}}2\int_0^{2\pi}\int_0^{2\pi}e^{-2t}(X_1,N)N+O(e^{-(2+2\eps)t})
d\theta d\phi\\
&=\int_0^{2\pi}\int_0^{2\pi}\frac12(X_1,N)N d\theta d\phi
+O(e^{-2\eps t})
\end{split}
\end{equation}

\bibliographystyle{plain}
\bibliography{../reference.bib}

\noindent \textsc{Laurent Mazet, Universit\'e Paris-Est, LAMA (UMR 8050), UPEC,
UPEM, CNRS, 61, avenue du G\'en\'eral de Gaulle, F-94010 Cr\'eteil cedex,
France}

\noindent \texttt{laurent.mazet@math.cnrs.fr}

\end{document}